\newtheorem{theorem}{Theorem}[section]
\newtheorem{conjecture}[theorem]{Conjecture}
\theoremstyle{definition}
\newtheorem{definition}[theorem]{Definition}
\theoremstyle{remark}
\newtheorem{remark}[theorem]{Remark}
\numberwithin{equation}{section}
\newcommand{\abs}[1]{\lvert#1\rvert}
\begin{document}

\title[An Amazing Prime Heuristic]{An Amazing Prime Heuristic}
\copyrightinfo{2021}{Chris K. Caldwell}
\author{Chris K. Caldwell}
\address{Department of Mathematics and Statistics\\
University of Tennessee at Martin}
\email{caldwell@utm.edu}
\date{March 2021}


\maketitle

\section{Introduction}
\addtocontents{toc}{\vspace{3pt}}

The record for the largest known twin prime is constantly
changing.  For example, in October of 2000, David Underbakke
found the record primes:
$$83475759\cdot 2^{64955}\pm 1.$$
The very next day Giovanni La Barbera found the new record primes:
$$1693965\cdot 2^{66443}\pm 1.$$
The fact that the size of these records are close is no
coincidence!  Before we seek a record like this, we usually try to
estimate how long the search might take, and use this information
to determine our search parameters.  To do this we need to know
how common twin primes are.

It has been conjectured that the number of twin primes less than
or equal to $N$ is asymptotic to
$$2C_{2}\int_{2}^{N}\frac{dx}{(\log x)^{2}}\sim
\frac{2C_{2}N}{(\log N)^{2}}$$ where $C_2$, called the twin prime
constant, is approximately $0.6601618$.  Using this we can
estimate how many numbers we will need to try before we find a
prime.  In the case of Underbakke and La Barbera, they were both
using the same sieving software (NewPGen\footnote{Available from
http://www.utm.edu/research/primes/programs/NewPGen/} by Paul
Jobling) and the
same primality proving software (Proth.exe\footnote{Available from
http://www.utm.edu/research/primes/programs/gallot/} by Yves
Gallot) on similar hardware--so of course they choose
similar ranges to search. But where does this conjecture
come from?

In this chapter we will discuss a general method to form
conjectures similar to the twin prime conjecture above.  We will
then apply it to a number of different forms of primes such as
Sophie Germain primes, primes in arithmetic progressions,
primorial primes and even the Goldbach conjecture.  In each case
we will compute the relevant constants (e.g., the twin prime
constant), then compare the conjectures to the results of computer
searches. A few of these results are new--but our main goal is to
illustrate an important technique in heuristic prime number theory and
apply it in a consistent way to a wide variety of problems.

\subsection{The key heuristic}

A heuristic is an educated guess.  We often use them to give a
rough idea of how long a program might run--to estimate how long
we might need to wait in the brush before a large prime comes
wandering by.  The key to all the results in this chapter is the
following:

\begin{quote}
The prime number theorem states that the number of primes less
than $n$ is asymptotic to $1/\log n$. So if we choose a random
integer $m$ from the interval $[1,n]$, then the probability that
$m$ is prime is asymptotic to $1/\log n$.
\end{quote}

\noindent Let us begin by applting this to a few simple
examples.

First, as $n$ increases, $1/\log n$ decreases, so it seemed reasonable to
Hardy and Littlewood to conjecture that there are more primes in
the set $\{1, 2, 3, \ldots, k\}$ than in $\{n+1, n+2, n+3,
\ldots, n+k\}$.  In other words, Hardy and Littlewood \cite{HL23}
conjectured.

\begin{conjecture}
For sufficiently large integers $n$, $\pi(k) \ge \pi(n+k) -
\pi(n)$.
\label{BadConjecture}
\end{conjecture}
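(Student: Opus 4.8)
The plan is to start with the route the key heuristic itself suggests. By the prime number theorem the ``probability'' that an integer near $m$ is prime is about $1/\log m$, so one would estimate
\[
\pi(n+k)-\pi(n)\;\approx\;\sum_{m=n+1}^{n+k}\frac{1}{\log m}\;\approx\;\frac{k}{\log n},
\]
while $\pi(k)\approx k/\log k$ (more precisely $\mathrm{li}(k)$). Since $\log n>\log k$ whenever $n>k$, this back-of-the-envelope comparison gives $\pi(n+k)-\pi(n)\lesssim k/\log n<k/\log k\approx\pi(k)$, which is exactly the claimed inequality. So a first proof proposal is: upgrade this to a theorem by bounding the error term in the prime number theorem uniformly on every short interval $[n+1,n+k]$ and comparing with $\pi(k)$.

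I expect this approach to collapse, and in fact I would not try hard to prove Conjecture~\ref{BadConjecture}, because it is in fact widely believed to be false. The trouble with the sketch above is that $\pi(n+k)-\pi(n)$ has no good \emph{uniform} upper bound: short intervals can be far richer in primes than average. Worse, the very refinement of the heuristic that this paper develops --- the Hardy--Littlewood prime $k$-tuples conjecture --- predicts that some admissible pattern of residues, packed into a window of length $k$ more efficiently than $\{1,2,\dots,k\}$ itself, will infinitely often carry strictly more than $\pi(k)$ primes. That is precisely what Hensley and Richards established rigorously: the prime $k$-tuples conjecture and Conjecture~\ref{BadConjecture} cannot both hold.

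So my actual proposal is a \emph{dis}proof sketch. First, construct an admissible set $\mathcal{H}=\{h_1<\dots<h_r\}$ lying in an interval of length $k$ (so $h_r-h_1<k$) but with $r>\pi(k)$; such sets exist because the largest admissible configuration that fits in a window of length $k$ outgrows $\pi(k)$, and exhibiting one requires a careful sieve construction that chooses, for each small prime, a residue class to avoid while keeping enough integers alive. Second, translate $\mathcal{H}$ so that $1\le h_1$ and $h_r\le k$, and invoke the prime $k$-tuples conjecture: there are infinitely many $n$ with $n+h_1,\dots,n+h_r$ simultaneously prime, whence $\pi(n+k)-\pi(n)\ge r>\pi(k)$ for arbitrarily large $n$, contradicting the conjecture. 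The genuinely hard step is the first one --- verifying that admissible tuples can beat $\pi(k)$ is the technical core of the Hensley--Richards theorem --- while the second step rests on an unproved conjecture, so the disproof is only conditional. That conditional tension is exactly the cautionary point the surrounding discussion is setting up.
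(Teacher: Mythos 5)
Your proposal matches the paper's treatment: the conjecture is justified there by exactly the same density heuristic (since $1/\log$ is decreasing, the interval $[1,k]$ should be at least as rich in primes as $[n+1,n+k]$), and the paper likewise goes on to observe, via Hensley and Richards, that this conjecture is incompatible with the prime $k$-tuples conjecture. The only difference is one of presentation: where you describe the admissible-set construction abstractly as the ``technical core,'' the paper exhibits Forbes's explicit $4954$-term tuple $\{n\pm p_{1217},\dots,n\pm p_{24049}\}$ lying in a window of width $48098$ with $\pi(48098)<4954$.
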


They made this conjecture on the basis of very little numerical
evidence saying ``An examination of the primes less than 200
suggests forcibly that $\rho(x) \le \pi(x) (x \ge 2)$'' (page
54). (Here $\rho(x) = \limsup_{n \rightarrow \infty} \pi(n+x) -
\pi(x)$.)   By 1961 Schinzel \cite{Schinzel61} had verified this to
$k=146$ and by 1974 Selfridge {\it et. al.} \cite{HR74} had
verified it to 500. As reasonable sounding as this conjecture is,
we will give a strong argument against it in just a moment.

Second, suppose the Fermat numbers $F_n = 2^{2^n}+1$ behaved as
random numbers.\footnote{There are reasons not to assume this such as
the Fermat numbers are pairwise relatively prime.}
Then the probability that $F_n$
is prime should be about $1/\log(F_n) \sim 1/(2^n \log 2)$.  So
the expected number of such primes would be on the order of
$\sum_{n=0}^{\infty} 1/(2^n \log 2) = 2/\log 2$.  This is the
argument Hardy and Wright used when they presented the following
conjecture \cite[pp. 15, 19]{HW79}:
\begin{conjecture} \label{FermtPrimeConjecture}
There are finitely many Fermat primes.
\end{conjecture}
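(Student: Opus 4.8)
The last statement is a conjecture, not a theorem, so what I propose is a careful rendering of the heuristic that the excerpt attributes to Hardy and Wright, followed by a check that its conclusion is robust. The plan has three parts: set up the probabilistic model for the Fermat numbers; compute the expected number of Fermat primes and pass from a convergent expectation to ``finitely many'' via a Borel--Cantelli-type principle; and then re-examine the model in light of the special arithmetic of the $F_n$ to argue that the prediction does not change.

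For the first two parts, I would model $F_n = 2^{2^n}+1$ as a ``random'' integer of its size, so that the key heuristic gives it probability $\sim 1/\log F_n = 1/(2^n\log 2)$ of being prime. Summing, the expected number of Fermat primes is $\sum_{n\ge 0} 1/(2^n\log 2) = 2/\log 2$, the finite value already recorded in the excerpt. Since this series converges, the first Borel--Cantelli lemma --- which needs no independence --- says that with probability $1$ only finitely many $F_n$ are prime, which is exactly Conjecture~\ref{FermtPrimeConjecture}. I would add as corroboration that the five known Fermat primes $F_0,\dots,F_4$ already nearly exhaust the predicted total of $\approx 2.88$, and that no further one has ever been found.

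The delicate part is that Fermat numbers are visibly not random: they are pairwise relatively prime, and for $n \ge 2$ every prime factor of $F_n$ is $\equiv 1 \pmod{2^{n+2}}$. So I would redo the estimate with these constraints built in: conditioning on oddness replaces the probability by $2/(2^n\log 2)$, and removing the small primes ruled out by the congruence multiplies by a Mertens-type factor growing at most polynomially in $n$. The essential observation is that any such correction leaves the general term $O(\mathrm{poly}(n)/2^n)$, so the series still converges and the heuristic still predicts finiteness. The main obstacle, and the reason this remains a conjecture, is that there is no known way to convert ``the expected count is finite'' into a proof: the events ``$F_n$ prime'' are not independent, and the Fermat numbers are far too sparse for sieve or analytic techniques to reach --- so what the argument delivers is a strongly supported heuristic, not a theorem.
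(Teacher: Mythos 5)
Your proposal follows essentially the same route as the paper: model $F_n$ as a random integer of its size, assign it probability $1/\log F_n \sim 1/(2^n\log 2)$ of being prime, and note that the expected count $\sum_n 1/(2^n\log 2) = 2/\log 2$ converges, which is exactly the Hardy--Wright argument the paper reproduces. Your additional robustness check against the special arithmetic of the $F_n$ elaborates on what the paper relegates to a footnote, but it does not change the approach.
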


The same argument, when applied to the Mersenne numbers, Woodall
numbers, or Cullen numbers suggest that there are infinitely many
primes of each of these forms.  But it would also imply there are
infinitely many primes of the form $3^n-1$, even though all but
one of these are composite.  So we must be a more careful than
just adding up the terms $1/\log n$.  We will illustrate how this
might be done in the case of polynomials in the next section.

As a final example we point out that in 1904, Dickson conjectured
the following:

\begin{conjecture} \label{DicksonsConjecture}
Suppose $a_i$ and $b_i$ are integers with $a_i>1$.
If there is no prime which
divides each of
$$\{b_1 x + a_1, b_2 x + a_2, \ldots, b_n x + a_n \}$$
for every $x$, then there are infinitely many integers values of
$x$ for which these terms are simultaneously prime.
\end{conjecture}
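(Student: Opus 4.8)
The honest plan is to concede that Conjecture~\ref{DicksonsConjecture} cannot be \emph{proved} here: it implies the twin prime conjecture (take the forms $x$ and $x+2$), the Sophie Germain conjecture (take $x$ and $2x+1$), and much more, so a rigorous proof is entirely out of reach of current methods. What I would do instead, exactly in the spirit of this chapter, is build the heuristic prediction that makes the statement compelling, and then name the one step that stubbornly resists being made rigorous.

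First, model each linear form $b_i x + a_i$ as a ``random'' integer of size about $x$ and apply the key heuristic: such a number is prime with probability $\sim 1/\log x$. Pretending the $n$ events are independent, the chance that all $n$ forms are simultaneously prime at a given $x\le N$ is $(\log x)^{-n}$, and $\sum_{x\le N}(\log x)^{-n}\to\infty$, which already suggests infinitely many good $x$. But, just as with $3^n-1$, this ignores divisibility, so the second step is the local correction. For each prime $p$ let $\omega(p)=\#\{x\bmod p:\ p\mid \prod_{i}(b_ix+a_i)\}$; then the true survival probability mod $p$ is $1-\omega(p)/p$ rather than the independent‑model value $(1-1/p)^n$. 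Dickson's hypothesis --- that no prime $p$ divides $\prod_i(b_ix+a_i)$ for every integer $x$ --- is precisely the statement $\omega(p)<p$ for all $p$, which is exactly what keeps every correction factor nonzero. Folding in these corrections gives the predicted density
$$\mathfrak{S}=\prod_{p}\frac{1-\omega(p)/p}{(1-1/p)^{n}},$$
so that the heuristic count of admissible $x\le N$ becomes $\mathfrak{S}\sum_{x\le N}(\log x)^{-n}\sim\mathfrak{S}\int_{2}^{N}\frac{dx}{(\log x)^{n}}$. The third, routine, step is a convergence check: $\omega(p)=n$ for every $p$ coprime to $\prod_i b_i$ and to every $a_ib_j-a_jb_i$ with $i\ne j$, so all but finitely many factors are $1+O(1/p^{2})$; hence $\mathfrak{S}$ converges to a positive constant, the predicted count still diverges, and ``infinitely many'' falls out. (For $n=2$ with the forms $x,\ x+2$ this $\mathfrak{S}$ is exactly $2C_2$, recovering the twin prime constant of the Introduction.)

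The hard part --- the reason this remains a conjecture --- is the independence assumption in step one: primality in the different forms, and across different $x$, is genuinely correlated, and controlling those correlations well enough to extract a \emph{lower} bound for the count of good $x$ is exactly the parity obstruction of sieve theory. Sieve methods (Brun, Selberg, the large sieve) comfortably deliver the matching \emph{upper} bound $\ll \mathfrak{S}\, N/(\log N)^n$, and for the single form $n=1$ Dirichlet's theorem gives the full result; but for $n\ge 2$ no technique is known that guarantees even one admissible $x$, let alone infinitely many --- the bounded‑gap results of Zhang and Maynard--Tao produce primes in \emph{some} admissible configuration without yielding simultaneous primality of a prescribed tuple. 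So the proposal delivers what this chapter can actually certify: the singular series $\mathfrak{S}$, its positivity under Dickson's hypothesis, its convergence, and hence the asymptotic $\mathfrak{S}\int_2^N (\log x)^{-n}\,dx$ as the right heuristic count. Upgrading the lower half of that asymptotic from heuristic to theorem \emph{is} the open problem.
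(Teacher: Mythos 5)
Your proposal is correct and takes essentially the same route as the paper: the statement is a conjecture with no proof, and the paper's own justification is exactly your heuristic --- assign each form the probability $1/\log x$ of being prime, pretend independence to get $(\log x)^{-n}$, and then correct by the factor $\prod_p \frac{1-w(p)/p}{(1-1/p)^n}$, which is the paper's equation for the adjustment factor and leads to its Conjecture~\ref{KeyConjecture}. Your added remarks on the convergence of the singular series and on why sieve methods give only the upper bound go slightly beyond what the paper states, but they are consistent with it and change nothing essential.
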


How do we arrive at this conclusion?  By our heuristic, for
each $x$ the number $b_i x + a_i$ should be prime with a
probability $1/\log N$.  If the probabilities that each term is
prime are independent, then the whole set should be prime with
probability $1/(\log N)^n$.  They are not likely to be
independent, so we expect something on the order of $C/(\log N)^n$
for some constant $C$ (a function of the coefficients $a_i$ and
$b_i$).

In the following section we will sharpen Dickson's conjecture
in to a precise form like that of the twin prime conjecture above.

\subsection{A warning about heuristics}

What (if any) value do such estimates have?

They may have a great deal of value for those searching for
records and predicting computer run times, but mathematically
they have very little value.  They are just (educated) guesses,
not proven statements, so not ``real mathematics.'' Hardy and
Littlewood wrote: ``{\it Probability} is not a notion of pure
mathematics, but of philosophy or physics'' \cite[pg 37 footnote
4]{HL23}. They even felt it necessary to apologize for their
heuristic work:

\begin{quotation}
Here we are unable (with or without Hypothesis $R$) to offer
anything approaching a rigorous proof.  What our method yields is a
{\it formula}, and one which seems to stand the test of
comparison with the facts.  In this concluding section we propose
to state a number of further formulae of the same kind.
Our apology for doing so must be (1) that no similar formulae have
been suggested before, and that the process by which they are deduced
has at least a certain algebraic interest, and (2) that it seems
to us very desirable that (in default of proof)
the formula should be checked, and that we hope that some of the
many mathematicians interested in the computative
side of the theory of numbers may find them worthy
of their attention.  (\cite[pg 40]{HL23})
\end{quotation}

\noindent When commenting on this Bach and Shallit wrote:

\begin{quotation}
Clearly, no one can mistake these probabilistic arguments for
rigorous mathematics and remain in a state of
grace.\footnote{Compare this quote to John von Neumann's remark in
1951 ``Anyone who considers arithmetical methods of producing
random digits is, of course, in a state of sin.'' \cite[p.
1]{Knuth75}} Nevertheless, they are useful in making educated
guesses as to how number-theoretic functions should ``behave.''
(\cite[p. 248]{BS96})
\end{quotation}

Why this negative attitude?  Mathematics seeks proof.  It requires
results without doubt or dependence on unnecessary assumptions.
And to be blunt, sometimes heuristics fail!  Not only that, they
sometimes fail for even the most cautious of users. In fact we
have already given an example (perhaps you noticed?)

Hardy and Littlewood, like Dickson, conjectured that if there is
no prime which divides each of the following terms for every $x$,
then they are simultaneously primes infinitely often:
$$\{x+a_1, x+a_2, x+a_3, x+a_4, x+a_5, \ldots,  x+a_k\}$$
\cite[Conjecture X]{HL23}.  This is a special case of Dickson's
Conjecture is sometimes called {\bf the prime $k$-tuple conjecture}.
We have also seen that they conjectured $\pi(k) \ge \pi(n+k) -
\pi(n)$ (conjecture \ref{BadConjecture}). But in 1972, Douglas
Hensley and Ian Richards proved that one of these two conjectures
is false \cite{HR72, HR73, Richards74}!

Perhaps the easiest way to see the conflict between these
conjectures is to consider the following set of polynomials
found by Tony Forbes \cite{Forbes95}:

\begin{eqnarray*} \{n-p_{24049}, n-p_{24043}, \ldots, n-p_{1223}, n-p_{1217},\\
n+p_{1217}, n+p_{1223}, \ldots, n+p_{24043}, n+p_{24049}\}
\end{eqnarray*}

\noindent where $p_n$ is the $n$-th prime.  By Hardy and
Littlewood's first conjecture there are infinitely many integers $n$ so
that each of these $4954$ terms are prime.  But the width of this
interval is just $48098$ and $\pi(48098)<4954$.  So this
contradicts the second conjecture.

If one of these conjectures is wrong, which is it? Most
mathematicians feel it is the second conjecture that $\pi(k) \ge
\pi(n+k) - \pi(n)$ which is wrong. The prime $k$-tuple conjecture
receives wide support (and use!)  Hensley and Richards predicted
however
\begin{quotation}
Now we come to the second problem mentioned at the beginning of
this section: namely the smallest number $x_1+y_1$ for
which $\pi(x_1+y_1) > \pi(x_1)+\pi(y_1)$. We suspect, even
assuming the $k$-tuples hypothesis (B) is eventually proved
constructively, that the value of $x_1+y_1$ will never be found;
and moreover that no pair $x,y$ satisfying $\pi(x+y) >
\pi(x)+\pi(y)$ will ever be computed. (\cite[p. 385]{HR74})
\end{quotation}

What can we conclude from this example of clashing conjectures?
First that heuristics should be applied only with care.  Next they
should then be carefully tested. Even after great care and testing
you should not stake to much on their predictions, so
read this chapter with the usual bargain hunter's mottoes in
mind: ``buyer beware'' and ``your mileage may vary.''

\subsection{Read the masters}

The great mathematician Abel once wrote "It appears to me that if
one wants to make progress in mathematics, one should study the
masters and not the pupils."  Good advice,  but this is an area
short of masters.

Hardy and Littlewood's third paper on their circle method
\cite{HL23} is one of the first key papers in this area.  In this paper
they made the first real step toward the proving the Goldbach
conjecture, then gave more than a dozen conjectures on the
distribution of primes.  Their method is far more complicated
than what we present here--but it laid the basis for actual
proofs of some related results.

The approach we take here may have first been laid out by
Cherwell and Wright \cite[section 3]{CW60} (building on earlier
work by Cherwell \cite{Cherwell46}, St{\"a}ckel
\cite{Stackel1916}, and of course Hardy and Littlewood).  The
same approach was taken by Bateman and Horn \cite{BH62} (see also
\cite{BH65}).

Many authors give similar arguments including Brent, Shanks
\cite{Shanks62,SK67,Shanks78} and P{'o}lya \cite{Polya59}.

There are also a couple excellent ``students'' we should mention.
Ribenboim included an entire chapter on heuristics in his text
``the new book of prime number records'' \cite{Ribenboim95}.
Riesel also develops much of this material in his fine book
\cite{Riesel94}.  See also Schroeder
\cite{Schroeder83,Schroeder97}.

And as enthusiastic students we also add our little mark. Again,
most of what we present here was first done by others. Our only
claim to fame is a persistent unrelenting application of one
simple idea to a wide variety of problems. Enough talk, let's get
started!


\addtocontents{toc}{\vspace{5pt}}
\section{The prototypical example: sets of polynomials}
\addtocontents{toc}{\vspace{3pt}}

\subsection{Sets of polynomials}

We regularly look for integers that make a set of (one or more)
polynomials simultaneously prime. For example, simultaneous prime
values of $\{n,n+2\}$ are twin primes, of $\{n,2n+1\}$ are Sophie
Germain primes, and of $\{n,2n+1,4n+3\}$ are Cunningham chains of
length three.  So this is an interesting test case for our
heuristic.

What might stop a set of integer valued polynomials from being
simultaneously prime?  The same things that keep a single
polynomial from being prime: It might factor like
$9x^2-1$, or there might be a prime which divides every
value of the polynomial such as $3$ and $x^3-x+9$.  So
before we go much further we need a few restrictions on our
polynomials $f_{1}(x)$, $f_{2}(x)$, \ldots, $f_{k}(x)$. We
require that

\begin{itemize}
\item the polynomials $f_{i}(x)$ are irreducible, integer valued,
and have positive leading terms, and
\item  the degree $d_{i}$ of $f_{i}(x)$ is greater than zero $(i=1,2,...,k)$.
\end{itemize}

If we could treat the values of these polynomials at $n$ as
independent random variables, then by our key heuristic, the
probability that they would be simultaneously prime at $n$ would
be

\begin{equation}
\prod\limits_{i=1}^{k}\frac{1}{\log \text{ }f_{i}(n)}\sim
\frac{1}{ d_{1}d_{2}...d_{k}(\log n)^{k}}. \label{p(independent)}
\end{equation}

\noindent So the number of values of $n$ with $0<n\leq N$ which
yield primes would be primes approximately \

\begin{equation*}
\frac{1}{d_{1}d_{2}...d_{k}}\int_{2}^{N}\frac{dx}{(\log
x)^{k}}\sim \frac{N}{d_{1}d_{2}...d_{k}(\log N)^{k}}.
\end{equation*}

However, the polynomials are unlikely to behave both randomly and
independently. For example, $\{n,n+2\}$ are either both odd or
both even; and the second of $\{n,2n+1\}$ is never divisible by
two. To attempt to adjust for this, for each prime $p$, we will
multiply by the ratio of the probability that $p$ does not divide
the product of the polynomials at $n$, to the probability that
$p$ does not divides one of $k$ random integers. In other words,
we will adjust by multiplying by a measure of how far from
independently random the values are.

To find this {\em adjustment factor}, we start with the following
definition:

\begin{definition}
$w(p)$ is the number of solutions to $f_{1}(x)f_{2}(x)\cdot
...\cdot f_{k}(x)\equiv 0 \pmod{p}$ with $x$ in
$\{0,1,2,...,p-1\}$.
\end{definition}
For each prime then, we need to multiply by
\begin{equation}
\frac{\frac{p-w(p)}{p}}{(\frac{p-1}{p})^{k}}=\frac{1-w(p)/p}{(1-1/p)^{k}},
\label{factor for prime}
\end{equation}
and our complete adjustment factor is found by taking the product
over all primes $p$:
\begin{equation}
\prod\limits_{p}\frac{1-w(p)/p}{(1-1/p)^{k}}.  \label{adjustment
factor}
\end{equation}
This gives us the following conjecture (see
\cite{BH62,Dickson04}).

\begin{conjecture}[Dickson's Conjecture]
Let the irreducible polynomials $f_1 (x),$ $f_2 (x),$ \ldots, $f_k
(x)$ be integer valued, have a positive leading term, and suppose
$f_i(x)$ has degree $d_i >0$ $(i=1,2,\ldots,k)$. The number of
values of $n$ with $0 < n \leq N$ which yield simultaneous primes
is approximately

\begin{equation}
\frac{1}{d_{1}d_{2}...d_{k}}\prod\limits_{p}\frac{1-w(p)/p}{(1-1/p)^{k}}%
\int_{2}^{N}\frac{dx}{(\log x)^{k}}\sim
\frac{N}{d_{1}d_{2}...d_{k}(\log
N)^{k}}\prod\limits_{p}\frac{1-w(p)/p}{(1-1/p)^{k}}.
\label{pi(general)}
\end{equation}
\label{KeyConjecture}
\end{conjecture}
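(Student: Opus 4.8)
\section*{Proof proposal}

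Strictly speaking there is nothing here to prove: the statement is labelled a conjecture because it subsumes the twin prime conjecture, the prime $k$-tuple conjecture and Dickson's conjecture, all open. So the ``proof'' I would write is the heuristic derivation that \eqref{pi(general)} is the right guess. The plan is to start from the naive independence estimate \eqref{p(independent)}: if the values $f_1(n),\dots,f_k(n)$ behaved like independent random integers of their own size, the probability of simultaneous primality at $n$ would be $\sim 1/\bigl(d_1\cdots d_k(\log n)^k\bigr)$, since $\log f_i(n)\sim d_i\log n$. The whole point is that this is wrong, and wrong in a way that is localized at each small prime $p$: divisibility by $p$ is correlated among the $f_i(n)$ and between nearby values of $n$, so I would correct the estimate one prime at a time and then combine.

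Next I would do the per-prime bookkeeping. For a fixed prime $p$ and $n$ chosen uniformly modulo $p$, the true probability that $p$ fails to divide $f_1(n)\cdots f_k(n)$ is, by the very definition of $w(p)$, equal to $(p-w(p))/p$. The independence model instead predicts $(1-1/p)^k$, because in that model each $f_i(n)$ is a multiple of $p$ with probability $1/p$, independently in $i$. Taking the ratio of truth to model gives the adjustment \eqref{factor for prime}, multiplying the naive density by the product \eqref{adjustment factor} of these ratios over all $p$ gives the integrand of \eqref{pi(general)}, and replacing the sum over $n\le N$ by $\int_2^N dx/(\log x)^k\sim N/(\log N)^k$ yields the count. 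I would also note the consistency check $k=1$, $f_1(x)=x$: then $w(p)=1$ for all $p$, the product is $1$, and \eqref{pi(general)} is just the prime number theorem.

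Then I would owe the reader a word on why the infinite product in \eqref{adjustment factor} converges. The analytic input is that for an irreducible $f_i$ the number of roots of $f_i$ modulo $p$ averages to $1$ over $p$ (equivalently, $\zeta_K(s)/\zeta(s)$ is regular and nonzero at $s=1$ for $K=\mathbb{Q}[x]/(f_i(x))$; or Chebotarev), and that two distinct irreducibles share a root mod $p$ only for the finitely many $p$ dividing their resultant. Hence $w(p)=k$ on average, $\sum_p (w(p)-k)/p$ converges conditionally, and since $\log\frac{1-w(p)/p}{(1-1/p)^k}=(k-w(p))/p+O(1/p^2)$ the product converges (conditionally, so the order of factors by increasing $p$ matters). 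Here I would also invoke the hypotheses on the $f_i$: irreducible and positive leading term so that $f_i(n)\to\infty$ and $\log f_i(n)\sim d_i\log n$; degree $d_i>0$ so the polynomial is not a constant; integer-valued so $w(p)$ is well defined; and the implicit Dickson condition $w(p)<p$ for all $p$, without which some factor in \eqref{adjustment factor} vanishes and the conjecture correctly predicts only finitely many $n$.

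The main obstacle is the one that cannot be removed and is exactly why this is a conjecture: the step that multiplies the local survival densities together treats the events ``$p\nmid\prod_i f_i(n)$'' as independent across $p$, and then runs this Eratosthenes-style sieve all the way up to $\sqrt{f_i(n)}$ with no control of the accumulated error. That extrapolation is precisely what collapses when one tries to make it rigorous, so the honest outcome of this plan is a \emph{formula} — one that matches the prime number theorem at $k=1$ and agrees with all computational evidence — rather than a theorem.
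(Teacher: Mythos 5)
Your derivation follows the paper's own route exactly: start from the naive independence estimate \eqref{p(independent)}, correct it prime by prime with the ratio \eqref{factor for prime} of the true local density $(p-w(p))/p$ to the model density $(1-1/p)^{k}$, take the product \eqref{adjustment factor} over all $p$, and integrate to obtain \eqref{pi(general)}. Your additional remarks on the convergence of the infinite product and on the $k=1$ consistency check go beyond what the paper records, but they supplement rather than alter the argument, and your framing of the result as a heuristic formula rather than a theorem matches the paper's stance.
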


The ratio on the right is sufficient if $N$ is very large or we just
need a rough estimate, but the integral usually gives a better
estimate for small $N$.  Sometimes we wish an even tighter
estimate for relatively small $N$. Then we use the right side of
equation \ref{p(independent)} and write the integral in the
conjecture above as

\begin{equation}\label{pi(better)}
\prod\limits_{p}\frac{1-w(p)/p}{(1-1/p)^{k}}
\int_{2}^{N}\frac{dx}{\log f_1(x) \log f_2(x)\cdot\ldots\cdot\log f_k(x)
 }
\end{equation}

\addtocontents{toc}{\vspace{5pt}}
\section{Sequences of linear polynomials}
\addtocontents{toc}{\vspace{3pt}}

Conjecture \ref{KeyConjecture} gives us an approach to estimating
the number of primes of several forms. In this section we will
apply conjecture it to twin primes, Sophie
Germain primes, primes of the form $n^{2}+1$, and several other
forms of primes. In each case, we will compare the estimates in
the conjecture to the actual numbers of such primes.

\subsection{Twin primes}

To find twin primes we can use the polynomials $n$ and $n+2$.
Note that $w(2)=1$, and $w(p)=2$ for all odd primes $p$. With a
little algebra, we see our adjustment factor \ref{adjustment
factor} is

\begin{equation}\label{eq:C_2}
2\prod\limits_{p>2}1-\frac{1}{(p-1)^{2}} =
2\prod\limits_{p>2}\frac{p(p-2)}{(p-1)^{2}}.
\end{equation}

\noindent This product over odd primes is called the twin primes constant:
$$C_{2}=0.66016\ 18158\ 46869\ 57392\ 78121\ 10014\ 55577\ 84326\ ...$$

\noindent Gerhard Niklasch has computed $C_2$ to over 1000 decimal
places using the methods of Moree \cite{Moree2000}.

In this case, conjecture \ref{KeyConjecture} becomes:

\begin{conjecture}[Twin prime conjecture] \label{TwinPrimeConjecture}
The expected number of twin primes $\{p,p+2\}$ with $p\leq N$ is
\begin{equation}\label{pi(twin)}
2C_{2}\int_{2}^{N}\frac{dx}{(\log x)^{2}}\sim
\frac{2C_{2}N}{(\log N)^{2}}.
\end{equation}
\end{conjecture}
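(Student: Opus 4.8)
The plan is to derive Conjecture \ref{TwinPrimeConjecture} as the special case of Conjecture \ref{KeyConjecture} with $k=2$, $f_1(x)=x$, and $f_2(x)=x+2$. These two polynomials are irreducible, integer valued, and have positive leading coefficient, and their degrees are $d_1=d_2=1$, so the hypotheses of Conjecture \ref{KeyConjecture} are satisfied; since $d_1 d_2 = 1$ the degree prefactor is trivial, and the whole problem reduces to evaluating the adjustment factor \ref{adjustment factor} for the product $f_1(x)f_2(x) = x(x+2)$.

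First I would compute $w(p)$, the number of $x\in\{0,1,\dots,p-1\}$ with $x(x+2)\equiv 0\pmod p$. At $p=2$ the roots $x\equiv 0$ and $x\equiv -2$ coincide, so $w(2)=1$; for every odd prime $p$ the roots $0$ and $-2$ are distinct modulo $p$, so $w(p)=2$ (in particular $w(p)<p$ for all $p$, so $\{n,n+2\}$ is admissible and each factor below is positive). Substituting into the per-prime factor \ref{factor for prime} gives $\frac{1-1/2}{(1-1/2)^2}=2$ at $p=2$, while at an odd prime $p$
\begin{equation*}
\frac{1-2/p}{(1-1/p)^2} \;=\; \frac{p(p-2)}{(p-1)^2} \;=\; 1-\frac{1}{(p-1)^2}.
\end{equation*}
Taking the product over all primes then yields
\begin{equation*}
\prod_p \frac{1-w(p)/p}{(1-1/p)^2} \;=\; 2\prod_{p>2}\left(1-\frac{1}{(p-1)^2}\right) \;=\; 2\prod_{p>2}\frac{p(p-2)}{(p-1)^2} \;=\; 2C_2,
\end{equation*}
which is exactly the factor \ref{eq:C_2}. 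Inserting this, together with $d_1 d_2 = 1$, into \ref{pi(general)} produces the estimate $2C_2\int_2^N dx/(\log x)^2 \sim 2C_2 N/(\log N)^2$ for the number of $n$ with $0<n\le N$ making $\{n,n+2\}$ simultaneously prime, i.e.\ for the number of twin primes $\{p,p+2\}$ with $p\le N$ --- which is the assertion of Conjecture \ref{TwinPrimeConjecture}.

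The one step genuinely needing care is that the infinite product over the odd primes converges to a positive real number, so that ``$2C_2$'' names a bona fide constant and not merely a formal expression. This is immediate from the identity $\frac{1-w(p)/p}{(1-1/p)^2}=1-(p-1)^{-2}$ established above: since $\sum_{p>2}(p-1)^{-2}\le\sum_{m\ge 2}m^{-2}<\infty$ the product converges absolutely, and since no factor vanishes the limit $C_2$ is nonzero. Beyond that the derivation is pure bookkeeping; there is no hard step, because all of the heuristic content --- the correction that measures the departure from ``random and independent'' --- has already been packaged into Conjecture \ref{KeyConjecture}. Since the statement is itself a conjecture rather than a theorem, what I am outlining is not a rigorous proof but a specialization of that general heuristic, and the only nontrivial mathematical ingredient is the convergence just noted. (For small $N$ one could instead invoke the sharper form \ref{pi(better)}, replacing $(\log x)^2$ by $\log x\,\log(x+2)$, but nothing essential changes.)
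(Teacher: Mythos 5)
Your derivation is correct and follows exactly the route the paper takes: specialize Conjecture \ref{KeyConjecture} to $f_1(x)=x$, $f_2(x)=x+2$, compute $w(2)=1$ and $w(p)=2$ for odd $p$, and identify the adjustment factor \ref{adjustment factor} with $2C_2$ via the algebra in \ref{eq:C_2}. Your added observation that the product converges absolutely (since $\sum_{p>2}(p-1)^{-2}<\infty$) is a small but worthwhile supplement the paper leaves implicit.
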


\noindent (This is \cite[Conjecture ??]{HL14}.)  For a different
heuristic argument for the same result see \cite[section 22.20]{HW79}.

In practice this seems to be a exceptionally good estimate (even
for small $N$)--see Table \ref{table:Twin}.  (The last few values
in Table \ref{table:Twin} were calculated by T. Nicely
\cite{Nicely95}.\footnote{See also
http://www.trnicely.net/counts.html}.)

\begin{table}
\caption{Twin primes less than $N$}
\begin{tabular}{crrr}
\hline & \textbf{actual} &
\multicolumn{2}{c}{\textbf{predicted}} \\
$N$ & \textbf{number} & \textbf{integral} & \textbf{ratio}
\\ \hline
$10^{3}$ & \textbf{35} & \textbf{46} & 28 \\
$10^{4}$ & \textbf{205} & \textbf{214} & 155 \\
$10^{5}$ & \textbf{1224} & \textbf{1249} & 996 \\
$10^{6}$ & \textbf{8169} & \textbf{8248} & 6917 \\
$10^{7}$ & \textbf{58980} & \textbf{58754} & 50822 \\
$10^{8}$ & \textbf{440312} & \textbf{440368} & 389107 \\
$10^{9}$ & \textbf{3424506} & \textbf{3425308} & 3074426 \\
$10^{10}$ & \textbf{27412679} & \textbf{27411417} & 24902848 \\
$10^{11}$ & \textbf{224376048} & \textbf{224368865} & 205808661 \\
$10^{12}$ & \textbf{1870585220} & \textbf{1870559867} & 1729364449 \\
$10^{13}$ & \textbf{15834664872} & \textbf{15834598305} & 14735413063 \\
$10^{14}$ & \textbf{135780321665} & \textbf{135780264894} & 127055347335 \\
$10^{15}$ & \textbf{1177209242304} & \textbf{1177208491861} & 1106793247903 \\
\hline
\end{tabular}
\label{table:Twin}
\end{table}

It has been proven by sieve methods, that if you replace the $2$
in our estimate (\ref{pi(twin)}) for the number of twin primes
with a $7$, then you have a provable upper bound for $N$
sufficiently large. Brun first took this approach in 1919 when he
showed we could replace the $2$ with a $100$ and get an upper
bound from some point $N_{0}$ onward \cite{Brun19}. There has
been steady progress reducing the constant since Brun's article
(and $7$ is not the current best possible value).  Unfortunately
there is no known way of changing this into a lower bound--as it
has not yet been proven there are infinitely many twin primes.

\subsection{Prime pairs $\{n,n+2k\}$ and the Goldbach conjecture}

What if we replace the polynomials $\{n,n+2\}$ with $\{n,n+2k\}$?
In this case $w(p)=1$ if $p|2k$ and $w(p)=2$ otherwise, so the
adjustment factor \ref{eq:C_2} becomes
\begin{equation}
C_{2,k} = C_{2}\prod\limits_{p\mid k, p>2}\frac{p-1}{p-2}.
\end{equation}
With this slight change, conjecture \ref{KeyConjecture} now is

\begin{conjecture}[Prime pairs conjecture]
The expected number of prime pairs $\{p,p+2k\}$ with $p\leq N$ is
\begin{equation}
2C_{2,k}\int_{2}^{N}\frac{dx}{(\log x)^{2}}\sim
\frac{2C_{2,k}N}{(\log N)^{2}}. \label{eq:PrimePairs}
\end{equation}
\label{conj:PrimePairs}
\end{conjecture}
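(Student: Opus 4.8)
The plan is to obtain Conjecture \ref{conj:PrimePairs} as the special case of Conjecture \ref{KeyConjecture} applied to the two linear polynomials $f_1(n)=n$ and $f_2(n)=n+2k$, so that the number of polynomials is $2$ and $d_1=d_2=1$; the only work is to evaluate $w(p)$ and to put the adjustment factor \ref{adjustment factor} into the stated closed form.

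First I would count, for each prime $p$, the residues $x\in\{0,1,\ldots,p-1\}$ with $x(x+2k)\equiv 0\pmod{p}$. If $p\nmid 2k$ the residues $0$ and $-2k$ are distinct mod $p$, so $w(p)=2$; if $p\mid 2k$, that is $p=2$ or $p$ an odd prime dividing $k$, the two roots collapse to $0$ and $w(p)=1$. This is exactly the dichotomy recorded just before the statement.

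Next I would substitute into the per-prime factor \ref{factor for prime} with the exponent equal to $2$. The prime $2$ contributes $(1-1/2)/(1-1/2)^2=2$; an odd prime $p\nmid k$ contributes the generic twin-prime factor $(1-2/p)/(1-1/p)^2=p(p-2)/(p-1)^2$; and an odd prime $p\mid k$ contributes $(1-1/p)/(1-1/p)^2=p/(p-1)$. Hence the adjustment factor \ref{adjustment factor} equals
\[
2\prod_{\substack{p>2\\ p\nmid k}}\frac{p(p-2)}{(p-1)^2}\ \prod_{\substack{p>2\\ p\mid k}}\frac{p}{p-1}.
\]
The first product differs from $C_2=\prod_{p>2}p(p-2)/(p-1)^2$ only in the finitely many omitted factors with $p\mid k$; reinstating them and simultaneously dividing by them rewrites the display as
\[
2C_2\prod_{\substack{p>2\\ p\mid k}}\frac{p/(p-1)}{p(p-2)/(p-1)^2}=2C_2\prod_{\substack{p>2\\ p\mid k}}\frac{p-1}{p-2}=2C_{2,k},
\]
with $C_{2,k}$ as defined before the statement. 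Here I would remark that the product converges (it agrees with the convergent product for $C_2$ outside finitely many factors), that $C_{2,k}$ depends only on the set of odd primes dividing $k$, and that $C_{2,k}=C_2$ when $k$ is a power of $2$, recovering Conjecture \ref{TwinPrimeConjecture}. Feeding $d_1 d_2=1$ and this adjustment factor into \ref{pi(general)} then gives the count $2C_{2,k}\int_2^N dx/(\log x)^2\sim 2C_{2,k}N/(\log N)^2$, which is the assertion.

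The hard part is not any step above, since each is a short calculation; it is that there is nothing here to prove in the strict sense. Conjecture \ref{KeyConjecture} is itself only a heuristic, resting on the unjustified assumption that, once the congruence conditions modulo each $p$ are accounted for, the values $n$ and $n+2k$ behave like independent random integers of their size. So the honest content of the argument is that $2C_{2,k}\int_2^N dx/(\log x)^2$ is what the key heuristic predicts for the prime pairs $\{p,p+2k\}$ --- a prediction to be checked against computation (compare Table \ref{table:Twin} for $k=1$), not a theorem.
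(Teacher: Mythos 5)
Your derivation is correct and follows exactly the paper's route: specialize Conjecture \ref{KeyConjecture} to $f_1(n)=n$, $f_2(n)=n+2k$, compute $w(p)=1$ for $p\mid 2k$ and $w(p)=2$ otherwise, and rewrite the resulting adjustment factor as $2C_{2,k}=2C_2\prod_{p\mid k,\,p>2}\frac{p-1}{p-2}$. You supply more detail than the paper does (which compresses this to one sentence), and your closing caveat about the heuristic, non-theorem status of the statement matches the paper's own framing.
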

\noindent (This is \cite[Conjecture B]{HL23}.)

For example, when searching for primes $\{n,n+210\}$ we expect to
find (asymptotically) $\frac{2}{1}\frac{4}{3}\frac{6}{5}=3.2$
times as many primes as we find twins.  Table \ref{table:pairs}
shows that this is indeed the case.

\begin{table}[htbp]
\caption{Prime pairs $\{n,n+2k\}$ with $n\le N$}
\begin{tabular}{rrr|rr|rr}
\hline
 & \multicolumn{2}{c}{$k=6$} & \multicolumn{2}{c}{$k=30$} & \multicolumn{2}{c}{$k=210$} \\
$N$ & \textbf{actual} & \textbf{predicted} & \textbf{actual} & \textbf{predicted}
 & \textbf{actual} & \textbf{predicted} \\
\hline $10^3$ & \textbf{74} & \textbf{86} & \textbf{99} &
\textbf{109}
 & \textbf{107} & \textbf{118} \\
$10^4$ & \textbf{411} & \textbf{423} & \textbf{536} & \textbf{558}
 & \textbf{641} & \textbf{653} \\
$10^5$ & \textbf{2447} & \textbf{2493} & \textbf{3329} &
\textbf{3316}
 & \textbf{3928} & \textbf{3962} \\
$10^6$ & \textbf{16386} & \textbf{16491} & \textbf{21990} &
\textbf{21981}
 & \textbf{26178} & \textbf{26358} \\
$10^7$ & \textbf{117207} & \textbf{117502} & \textbf{156517}
 & \textbf{156663} & \textbf{187731} & \textbf{187976} \\
$10^8$ & \textbf{879980} & \textbf{880730} & \textbf{1173934}
 & \textbf{1174300} & \textbf{1409150} & \textbf{1409141} \\
$10^9$ & \textbf{6849047} & \textbf{6850611} & \textbf{9136632}
 & \textbf{9134141} & \textbf{10958370} & \textbf{10960950} \\
\hline
\end{tabular}
\label{table:pairs}
\end{table}

Note that asymptotically equation \ref{eq:PrimePairs} must also
give the expected number of consecutive primes whose difference
is $k$.  This can be shown (and the values estimated more
accurately for small $N$) using the inclusion-exclusion principle
\cite{Brent74,Mayoh68}.  From this it is conjectured that the
most common gaps between primes $\le N$ is always either 4 or a
primorial number ($2, 6, 30, 210, 2310, \ldots$) \cite{Harley94}.

``But wait--there is more'' the old infomercial exclaimed ``it
dices, it slices...''  Look at the prime pairs set this way: $\{
n, 2k-n\}$.  Now when both terms are prime we have found two
primes which add to $2k$.  Our adjustment factor is unchanged, so
the number of ways of writing $2k$ as a sum of two primes, often
denoted $G(2k)$, is approximately:
\begin{equation}
G(2k) \sim 2C_{2,k}N \int_{2}^{N}\frac{dx}{\log x \log(2k-x)}.
\end{equation}
This is equivalent to the conjecture as given by Hardy and
Littlewood \cite[Conjecture A]{HL23}:
\begin{conjecture}[Extended Goldbach conjecture]
The number of ways of writing $2k$ as a sum of two primes is
asymptotic to
\begin{equation} 2C_{2,k}\int_{2}^{N}\frac{dx}{(\log x)^{2}}\sim
\frac{2C_{2,k}N}{(\log N)^{2}}.
\end{equation}
\end{conjecture}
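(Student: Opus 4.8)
The plan is to read this off from Conjecture~\ref{KeyConjecture} applied to the pair of linear polynomials $f_1(x) = x$ and $f_2(x) = 2k - x$, counted over the window $0 < x < 2k$: an integer $n$ there makes $f_1(n)$ and $f_2(n)$ simultaneously prime precisely when $\{n, 2k - n\}$ is a pair of primes with sum $2k$, so the count the conjecture produces is $G(2k)$ (running $n$ over the whole window records ordered representations, which is what the leading factor $2$ in $2C_{2,k}$ normalizes for). Both polynomials are linear ($d_1 = d_2 = 1$), irreducible, and integer valued, and on $(0,2k)$ both take positive values; the one formal wrinkle is the negative leading coefficient of $f_2$, which I would dispose of by noting that this is a \emph{finite-range} count on a fixed window, so the sign of the leading term is irrelevant (equivalently, replace $f_2$ by $x - 2k$ and translate the window). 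The essential point is that the correlation structure of the two events ``$n$ prime'' and ``$2k - n$ prime'' is exactly the one already analyzed for $\{n, n + 2k\}$ in Conjecture~\ref{conj:PrimePairs}.

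First I would compute $w(p)$ from $x(2k - x) \equiv 0 \pmod p$. If $p \nmid 2k$ this has the two distinct roots $x \equiv 0$ and $x \equiv 2k$, so $w(p) = 2$; if $p \mid 2k$ it collapses to $x^2 \equiv 0$ with the single root $x \equiv 0$, so $w(p) = 1$. This is the same $w(p)$ that produced Conjecture~\ref{conj:PrimePairs}, so the adjustment factor \ref{adjustment factor} is again $2C_{2,k}$. Feeding this into Conjecture~\ref{KeyConjecture} with $d_1 d_2 = 1$, and using the sharper form \ref{pi(better)} of the integral -- the appropriate one here, since the relevant upper limit is the fixed, not-especially-large number $2k$ rather than a free parameter tending to infinity -- gives
\begin{equation*}
G(2k) \;\approx\; 2C_{2,k}\int_{2}^{2k}\frac{dx}{\log x\,\log(2k - x)} .
\end{equation*}
The last step is to check that this integral is asymptotic, as $k \to \infty$, to $2k/(\log 2k)^2$: after the substitution $x = 2kt$ the integrand equals $(1 + o(1))/(\log 2k)^2$ uniformly for $t$ in any fixed subinterval of $(0,1)$, while the two end zones (including the sliver where $\log(2k - x)$ is not even positive) contribute a negligible amount, so the interval of length $2k$ dominates. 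Setting $N = 2k$ this matches the claimed estimate $2C_{2,k}\int_2^N dx/(\log x)^2$, and since $\int_2^N dx/(\log x)^2 \sim N/(\log N)^2$ the two integral forms in the statement agree asymptotically.

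The hard part is none of this bookkeeping; it is that Conjecture~\ref{KeyConjecture} is itself only a heuristic, and here one leans on it in its most demanding regime. The number $2k$ is \emph{fixed}, so there is no $N \to \infty$ into which to absorb error terms; the assertion is really a uniform-in-$k$ claim that, after the single arithmetic correction $2C_{2,k}$, the two coordinates of a Goldbach partition behave like independent ``random primes.'' A genuine proof would require a lower-bound sieve for the pair $\{n, 2k - n\}$ that captures the full main term, and -- exactly as for the twin prime conjecture discussed above -- no such sieve is known: the unconditional record supplies only an upper bound of the right shape with an inflated constant, Goldbach-type theorems valid for almost all $2k$, and the result that every large $2k$ is a prime plus an almost-prime. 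So the honest status of the conclusion, as with every statement in this chapter, is ``heuristic, and strikingly consistent with the data'' -- compare the $k = 6, 30, 210$ columns of Table~\ref{table:pairs} -- rather than ``theorem.''
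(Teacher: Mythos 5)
Your derivation is exactly the paper's: it views a Goldbach partition of $2k$ as a simultaneous prime value of the pair $\{n,\,2k-n\}$, observes that $w(p)$ (hence the adjustment factor $2C_{2,k}$) is unchanged from the prime-pairs case $\{n,\,n+2k\}$, and writes the count as $2C_{2,k}$ times the integral of $1/(\log x\,\log(2k-x))$. You simply fill in details the paper leaves implicit (the explicit computation of $w(p)$ for $x(2k-x)$, the choice $N=2k$, and the asymptotic equivalence of the integral forms), so the two arguments coincide.
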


Hardy and Littlewood suggest that for testing this against the
actual results for small numbers, we follow Shah and Wilson and
use $1/((\log N)^2 - \log N)$ instead of $1/(\log N)^2$.

\subsection{Primes in Arithmetic Progression}

The same reasoning could be applied to estimate the number of
arithmetic progressions of primes with length $k$ by seeking
integers $n$ and $k$ such that each term of
$$\{n, n+d, n+2d, \ldots, n+(k-1)d\}$$ is prime.  In this case $w(p)=1$ if
$p$ divides $d$, and $w(p) = \min(p,k)$ otherwise. In particular,
if we wish all of the terms to be primes we must have $p|d$ for
all primes $p \le k$.  When this is the case, for a fixed $d$ we
have

\begin{equation}\label{eq:Akd}
A_{k,d} = \prod\limits_{p | d}\frac{1}{(1-1/p)^{k-1}}
\prod\limits_{p \nmid d}\frac{1-k/p}{(1-1/p)^{k}}.
\end{equation}

\noindent We can rewrite these in terms of the
Hardy-Littlewood constants

\begin{equation}\label{eq:c_k}
c_k = \prod\limits_{p > k}\frac{1-k/p}{(1-1/p)^{k-1}}
\end{equation}

\noindent as follows

$$A_{k,d} = c_k \prod\limits_{p \leq k}\frac{1}{(1-1/p)^{k-1}}
\prod\limits_{p > k, p \mid d}\frac{p-1}{p-k}.$$

\noindent Of course $A_{k,d}=0$ if $k\#$ does not divide $d$.

It is possible to estimate $c_k$ and $A_{k,k\#}$ to a half dozen
significant digits using product above over the first several
hundred million primes--but at the end of this section we will
show a much better method.  Table \ref{table:A} contains
approximations of the first of these constants.

\begin{table}[htbp]
\caption{Adjustment factors $A_{k,k\#}$ for arithmetic sequences}
\begin{tabular}{rrl|rrl} \hline
$k$ & $k\#$ & $A_{k,k\#}$ & $k$ & $k\#$ & $A_{k,k\#}$ \\
\hline
2 & 2 &   1.32032363169374 & 11 & 2310   & 629913.461423349 \\
3 & 6 &   5.71649719143844 & 12 & 2310   & 1135007.50238685 \\
4 & 6 &   8.30236172647483 & 13 & 30030  & 45046656.1742087 \\
5 & 30 &  81.0543595999686 & 13 & 30030  & 132128113.722194 \\
6 & 30 &  138.388898492679 & 15 & 30030  & 320552424.308155 \\
7 & 210 & 2590.65351840622 & 16 & 30030  & 527357440.662591 \\
8 & 210 & 7130.47817586170 & 17 & 510510 & 23636723084.1607 \\
9 & 210 & 16129.6476839631 & 18 & 510510 & 47093023670.0967 \\
10 & 210& 24548.2695388318 & 19 & 9699690& 3153485401596.08 \\
\hline
\end{tabular}
\label{table:A}
\end{table}

Once again we reformulate conjecture \ref{KeyConjecture} for our
specific case and this time find the following.

\begin{conjecture}
The number of arithmetic progressions of primes with length $k$,
common difference $d$, and beginning with a prime $p\leq N$ is
\begin{equation}
A_{k,d}\int_{2}^{N}\frac{dx}{(\log x)^{k}} \sim
\frac{A_{k,d}N}{(\log N)^{k}}.
\end{equation}
\end{conjecture}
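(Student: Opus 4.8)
The plan is to read off the formula as a direct specialization of Conjecture~\ref{KeyConjecture} (Dickson's Conjecture) to the $k$ linear polynomials $f_i(x) = x + (i-1)d$, $i = 1, 2, \ldots, k$. Each $f_i$ is irreducible, integer valued, and has positive leading term and degree $d_i = 1$, so the hypotheses of Conjecture~\ref{KeyConjecture} hold and $d_1 d_2 \cdots d_k = 1$. It then remains only to identify the adjustment factor $\prod_p \frac{1 - w(p)/p}{(1-1/p)^k}$ with $A_{k,d}$ of \eqref{eq:Akd}; the two displayed asymptotics follow immediately from the conjecture together with the standard estimate $\int_2^N dx/(\log x)^k \sim N/(\log N)^k$.

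First I would compute $w(p)$, the number of $x \in \{0, 1, \ldots, p-1\}$ with $\prod_{i=1}^{k} (x + (i-1)d) \equiv 0 \pmod p$. If $p \mid d$, every factor is congruent to $x$ modulo $p$, so $x \equiv 0$ is the only root and $w(p) = 1$. If $p \nmid d$, the residues $-(i-1)d \bmod p$ for $i = 1, \ldots, k$ are pairwise distinct, so the product has exactly $k$ roots when $k \le p$ and all $p$ residues are roots when $k \ge p$; hence $w(p) = \min(p, k)$. This is precisely the count recorded in the text preceding \eqref{eq:Akd}.

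Next I substitute these values into the per-prime factor \eqref{factor for prime}. For $p \mid d$ it is $\frac{1 - 1/p}{(1 - 1/p)^k} = \frac{1}{(1 - 1/p)^{k-1}}$; for $p \nmid d$ with $p > k$ it is $\frac{1 - k/p}{(1 - 1/p)^k}$; and for $p \nmid d$ with $p \le k$ we have $w(p) = p$, so $1 - w(p)/p = 0$ and the entire product collapses to zero. Hence the formula is nontrivial only when every prime $p \le k$ divides $d$, i.e.\ when $k\# \mid d$, which recovers the remark ``$A_{k,d} = 0$ if $k\#$ does not divide $d$.'' Assuming $k\# \mid d$ and splitting the product over $\{p \mid d\}$ and $\{p \nmid d\}$ (the latter consisting only of primes exceeding $k$) gives
\[ \prod_p \frac{1 - w(p)/p}{(1 - 1/p)^k} = \prod_{p \mid d} \frac{1}{(1 - 1/p)^{k-1}} \prod_{p \nmid d} \frac{1 - k/p}{(1 - 1/p)^k} = A_{k,d}, \]
which is \eqref{eq:Akd} (and, after an elementary regrouping, its equivalent expression in terms of the $c_k$ of \eqref{eq:c_k}). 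Since $d$ has only finitely many prime divisors and $(1 - k/p)(1 - 1/p)^{-k} = 1 + O(p^{-2})$, this product converges to a finite positive constant; inserting $d_1 \cdots d_k = 1$ and this adjustment factor into Conjecture~\ref{KeyConjecture} yields the claim.

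As the argument is heuristic rather than rigorous, there is no genuine obstacle; the only place calling for care is the computation of $w(p)$ — in particular, noticing that one prime $p \le k$ with $p \nmid d$ already forces the product to vanish — and, should one want the sharper form for small $N$, replacing the integrand by $1/(\log f_1(x) \cdots \log f_k(x))$ as in \eqref{pi(better)} before passing to the asymptotic $N/(\log N)^k$.
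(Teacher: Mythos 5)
Your proposal is correct and follows essentially the same route as the paper: specialize Conjecture~\ref{KeyConjecture} to the linear polynomials $n, n+d, \ldots, n+(k-1)d$ (all of degree one), compute $w(p)=1$ for $p\mid d$ and $w(p)=\min(p,k)$ otherwise, and identify the resulting adjustment factor with $A_{k,d}$, vanishing unless $k\#\mid d$. You actually supply more detail than the text, which merely states the values of $w(p)$ and the product; the only nitpick is that your claim that the residues $-(i-1)d$ are pairwise distinct mod $p$ needs the qualifier $k\le p$, which your stated conclusion $w(p)=\min(p,k)$ already accounts for.
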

To check this conjecture we counted the number of such arithmetic
progressions with common differences $6$, $30$, $210$ and $2310$.
The results (table \ref{table:AP345}) seem to support this
estimate well.

\begin{table}[htbp]
\caption{Primes in arithmetic progression, starting before $10^9$}
\begin{tabular}{rrr|rr|rr}
\hline common & \multicolumn{2}{c|}{length $k=3$} &
\multicolumn{2}{c|}{length $k=4$} & \multicolumn{2}{c}{length $k=5$} \\
difference & actual & predicted &
actual & predicted & actual & predicted \\
\hline
6 & \textbf{758163} & \textbf{759591} & \textbf{56643} & \textbf{56772} & \textbf{0} & \textbf{0} \\
30 & \textbf{1519360} & \textbf{1519170} & \textbf{227620} & \textbf{227074} & \textbf{28917} & \textbf{28687}  \\
210 & \textbf{2276278} & \textbf{2278725} & \textbf{452784} & \textbf{454118} & \textbf{85425} & \textbf{86037} \\
2310 & \textbf{2847408} & \textbf{2848284} & \textbf{648337} & \textbf{648640} & \textbf{142698} & \textbf{143326} \\
\hline & \multicolumn{2}{c|}{length $k=6$} &
\multicolumn{2}{c|}{length $k=7$} & \multicolumn{2}{c}{length
$k=8$} \\
\hline
30 & \textbf{2519} & \textbf{2555} & \textbf{0} & \textbf{0} & \textbf{0} & \textbf{0} \\
210 & \textbf{15146} & \textbf{15315} & \textbf{2482} & \textbf{2515} & \textbf{353} & \textbf{370}\\
2310 & \textbf{30339} & \textbf{30588} & \textbf{6154} & \textbf{6266} & \textbf{1149} & \textbf{1221} \\
\hline
\end{tabular}
\label{table:AP345}
\end{table}

This conjecture also includes some of the previous results as
special cases.  For example, when $k$ is one, we are just counting
primes, and as expected, $A_{1,d}=1$. It is also easy to show that
$A_{2,d}=2C_{2,d}$ and $A_{2,2}=2C_2$, so $A_{2,d}$ matches the
values from the Prime Pairs Conjecture \ref{conj:PrimePairs}.

What if we do not fix the common difference? Instead we might ask
how many arithmetic progressions of primes(with any common
difference) there are all of whose terms are less than $x$. Call
this number $N_k(x)$.  Grosswald \cite{Grosswald82} modified Hardy
\& Littlewoods' Conjecture X to conjecture:

\begin{conjecture}
The number of arithmetic progressions of primes $N_k(N)$ with
length $k$ all of whose terms are less than $N$ is
\begin{equation}
N_k(x) \sim \frac{D_k N^2}{2(k-1)(\log N)^{k}}
\end{equation}
where
\begin{equation}
D_k = \prod_{p \leq k} \frac{1}{p} \left(\frac{p}{p-1}\right)
^{k-1} \prod_{p
> k} \left( \frac{p}{p-1} \right) ^{k-1} \frac{p-k+1}{p}.
\end{equation}
\end{conjecture}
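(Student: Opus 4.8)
\medskip

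\noindent\emph{Proof proposal.} The plan is to derive Grosswald's formula by summing the preceding conjecture on progressions with a fixed common difference over all admissible $d$, subject to the constraint that every term of the progression lie below $N$. Fix $k\ge 2$. A progression $p,p+d,\dots,p+(k-1)d$ of primes has all of its terms less than $N$ exactly when its initial prime satisfies $p<N-(k-1)d$, so for each fixed $d$ the previous conjecture (applied with $N-(k-1)d$ in place of $N$) predicts
\[
A_{k,d}\int_{2}^{\,N-(k-1)d}\frac{dx}{(\log x)^{k}}\ \sim\ \frac{A_{k,d}\bigl(N-(k-1)d\bigr)}{(\log N)^{k}}
\]
such progressions, $\log x$ being essentially $\log N$ over the bulk of the range. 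Since $A_{k,d}=0$ unless $k\#\mid d$, only $d=k\#\,t$, $t=1,2,3,\dots$, contribute, and these run up to $t\le T:=N/\bigl((k-1)k\#\bigr)$. Writing $A_{k,k\#\,t}=A_{k,k\#}\,h(t)$ with the multiplicative weight $h(t)=\prod_{p\mid t,\ p>k}\frac{p-1}{p-k}$ (immediate from the product formula for $A_{k,d}$), summation over $d$ gives
\[
N_k(N)\ \sim\ \frac{A_{k,k\#}}{(\log N)^{k}}\Bigl(N\sum_{t\le T}h(t)-(k-1)k\#\sum_{t\le T}t\,h(t)\Bigr).
\]

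\noindent The next step is to evaluate the two sums over $h$. Write $h=\mathbf{1}*g$, i.e.\ $g=h*\mu$; then $g$ is multiplicative, supported on squarefree products of primes $>k$, with $g(p)=h(p)-1=\frac{k-1}{p-k}$, and $\sum_{d\ge1}|g(d)|/d=\prod_{p>k}\bigl(1+\frac{k-1}{p(p-k)}\bigr)<\infty$. Hence $\sum_{t\le T}h(t)=\sum_{d\le T}g(d)\lfloor T/d\rfloor=\overline h\,T+o(T)$, and likewise $\sum_{t\le T}t\,h(t)=\sum_{d\le T}d\,g(d)\sum_{e\le T/d}e=\tfrac12\,\overline h\,T^{2}+o(T^{2})$, where
\[
\overline h=\sum_{d\ge1}\frac{g(d)}{d}=\prod_{p>k}\Bigl(1+\frac{k-1}{p(p-k)}\Bigr)=\prod_{p>k}\frac{(p-1)(p-k+1)}{p(p-k)}.
\]
Substituting $T=N/\bigl((k-1)k\#\bigr)$, the second sum removes exactly half of the first, leaving
\[
N_k(N)\ \sim\ \frac{A_{k,k\#}\,\overline h}{2(k-1)\,k\#}\cdot\frac{N^{2}}{(\log N)^{k}}.
\]

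\noindent It then remains to identify $A_{k,k\#}\,\overline h/k\#$ with $D_k$, which is bookkeeping with Euler products. From $A_{k,k\#}=\prod_{p\le k}(1-1/p)^{-(k-1)}\prod_{p>k}\frac{1-k/p}{(1-1/p)^{k}}$ and the identity $1-k/p=(p-k)/p$, one finds that in $A_{k,k\#}\,\overline h/k\#$ each prime $p\le k$ contributes a local factor $\frac1p\bigl(\frac{p}{p-1}\bigr)^{k-1}$ and each prime $p>k$ a local factor $\frac{p-k+1}{p}\bigl(\frac{p}{p-1}\bigr)^{k-1}$ — exactly the two products defining $D_k$; that the numerator is $p-k+1$ and not $p-k$ is precisely the effect of the extra $1+\frac1{p-k}$ carried by $\overline h$. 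As a sanity check, $k=2$ gives $D_2=1$, i.e.\ $N_2(N)\sim N^{2}/\bigl(2(\log N)^{2}\bigr)\sim\binom{\pi(N)}{2}$, as it must.

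\noindent The genuinely delicate point is the interchange of the sum over $d$ with the per-$d$ asymptotic: when $d$ approaches $N/(k-1)$ the quantity $N-(k-1)d$ is small and $\log\bigl(N-(k-1)d\bigr)$ is nowhere near $\log N$, so the prediction for that $d$ is neither accurate nor uniform, and one must argue that this tail is negligible. That — together with the fact that the whole computation rests on the still-unproven fixed-difference conjecture — is why the statement can only be offered heuristically, whereas the mean-value estimates for $h$ and the Euler-product manipulations are themselves rigorous.
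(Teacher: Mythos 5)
Your derivation is correct, but the first thing to say is that the paper does not derive this conjecture at all: it simply states Grosswald's modification of Hardy and Littlewood's Conjecture X, cites \cite{Grosswald82} and \cite{GH79}, and proceeds to tabulate the constants $D_k$. So there is no in-paper argument to compare against, and what you have written supplies the missing heuristic, in the spirit of the rest of the section: sum the fixed-difference conjecture over $d=k\#\,t$ with $N$ replaced by $N-(k-1)d$, compute the mean value of the multiplicative weight $h(t)=A_{k,k\#t}/A_{k,k\#}$ by the standard convolution $h=\mathbf{1}*g$, and match Euler products. I have checked the bookkeeping: the factorization $p^{2}-kp+k-1=(p-1)(p-k+1)$ gives $1+\frac{k-1}{p(p-k)}=\frac{(p-1)(p-k+1)}{p(p-k)}$, which converts the local factor $\left(\frac{p}{p-1}\right)^{k-1}\frac{p-k}{p-1}$ of $A_{k,k\#}/k\#$ at $p>k$ into $\left(\frac{p}{p-1}\right)^{k-1}\frac{p-k+1}{p}$; the two sums over $t\le T=N/((k-1)k\#)$ do combine to produce the factor $\frac{1}{2(k-1)}$ (the second sum is exactly half the first); and the $k=2$ check $D_2=1$, $N_2(N)\sim\binom{\pi(N)}{2}$, is right. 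The only caveats are the ones you already flag, namely the lack of uniformity of the fixed-$d$ prediction as $d$ approaches $N/(k-1)$ and the conjectural status of the input, which is the appropriate level of honesty for a statement the paper itself only offers heuristically; your argument also makes transparent why $D_k$ has $p-k+1$ rather than $p-k$ in the numerator, something the paper's bare statement leaves unexplained.
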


\noindent Grosswald was able to prove this result in the case
$k=3$ \cite{GH79}.  His article also included approximations of
these constants $D_k$ with five significant digits.  Writing
these in terms of the Hardy-Littlewood constants
$$D_k = c_{k-1} \prod_{p < k} \frac{1}{p} \left(\frac{p}{p-1}\right)
^{k-1}$$ we have calculated these with 13 significant digits in
Table \ref{table:D}.

\begin{table}[htbp]
\caption{Adjustment factors $D_{k}$ for arithmetic sequences}
\begin{tabular}{rl|rl} \hline
$k$ & $D_k$ & $k$ & $D_k$ \\
\hline
3 & 1.320323631694 & 12 & 1312.319711299 \\
4 & 2.858248595719 & 13 & 2364.598963306 \\
5 & 4.151180863237 & 14 & 7820.600030245 \\
6 & 10.13179495000 & 15 & 22938.90863233 \\
7 & 17.29861231159 & 16 & 55651.46255350 \\
8 & 53.97194830013 & 17 & 91555.11122614 \\
9 & 148.5516286638 & 18 & 256474.8598544 \\
10 & 336.0343267492 & 19 & 510992.0103092 \\
11 & 511.4222820590 & 20 & 1900972.584874 \\
\hline
\end{tabular}
\label{table:D}
\end{table}

The longest known sequence of arithmetic primes (at the time this
was written) was found in 1993 \cite{PMT95}: it begins with the
prime $11410337850553$ and continues with common difference
$4609098694200$.  Ribenboim \cite[p. 287]{Ribenboim95} has a
table of the first known occurrence of arithmetic sequences of
primes of length $k$ for $12 \le k \le 22$.

\subsection{Evaluating the adjustment factors}

In 1961 Wrench \cite{Wrench61} evaluated the the twin prime
constant with just forty-two decimal place accuracy.  He clearly
did not do this with the product from equation (\ref{eq:C_2})!
Just how do we calculate these adjustment factors (also called
Hardy Littlewood constants and Artin type constants) with any
desired accuracy?  The key is to rewrite them in terms of the
zeta-functions which are easy to evaluate \cite{Bach97,BBC1999}.

Let $P(s)$ be the prime zeta-function:
$$P(s) = \sum_p \frac{1}{p^s}.$$
We can rewrite this using the usual zeta-function
$\zeta(s)=\sum_{n=1}^\infty \frac{1}{n^s}$ and the M\"obius
function $\mu(k)$ as follows (see \cite[pg. 65]{Riesel94}):
$$P(s) = \sum_{k=1}^\infty \frac{\mu(k)}{k} \log \zeta(ks).$$

To evaluate $c_{k}$ we take the logarithm of
equation \ref{eq:c_k} and find
$$\log \left( \prod_{p > k} \frac{1-k/p}{(1-1/p)^k} \right)
  = \sum_{p>k} \left( \log(1-k/p) - k \log(1-1/p) \right).$$
Using the McClaurin expansion for the $\log$ this is
\begin{equation*}
- \sum_{p>k} \sum_{j=1}^\infty \frac{k^j-k}{j p^j} = -
\sum_{j=2}^\infty \frac{k^j-k}{j} \sum_{p>k} p^j = -
\sum_{j=2}^\infty \frac{k^j-k}{j} \left( P(j) - \sum_{p \le k}
p^{-j} \right).
\end{equation*}

It is relatively easy to calculate the zeta-function (see
\cite{BBC1999,Riesel94}), so we now have a relatively easy way to
calculate $c_k$ and $A_{k,d}$.  This approach will easily get
us the fifteen significant decimal place accuracy shown
in table \ref{table:A}.

If we need more accuracy, then we could use the techniques found
in Moree \cite{Moree2000} which
Niklasch\footnote{http://www.gn-50uma.de/alula/essays/Moree/Moree.en.shtml}
used to calculate many such constants with $1000$ decimal places
of accuracy. Moree's key result is that the product
$$C_{f,g}(n) = \prod_{p>p_n}\left( 1-\frac{f(p)}{g(p)} \right)$$
(where $f$ and $g$ are monic polynomials with integer
coefficients satisfying $\deg(f) + 2 \le \deg(g)$ and
$p_n$ is the $n$th prime) can be written as
$$C_{f,g}(n) = \sum_{k=2}^\infty \zeta_n(k)^{-e_k}$$
where the exponents $-e_k$ are integers and $\zeta_n(s) =
\zeta(s) \prod_{p \le p_n}(1-p_n^{-s})$ is the partial zeta
function.

\subsection{Sophie Germain primes}

Recall that $p$ is a Sophie Germain prime if $2p+1$ is also prime
\cite {Yates91}. Therefore, we will use the polynomials $n$ and
$2n+1$. Again, $w(2)=1$ and $w(p)=2$ for all odd primes $p$; so
again our adjustment factor is the twin primes constant $C_{2}$.
This gives us exactly the same estimated number of primes as in
(\ref{pi(twin)}). We can improve this estimate by not replacing
$\log (2n+1)$ with $\log n$ (as we did in (\ref
{p(independent)})). This gives us the following,

\begin{conjecture}
The number of Sophie Germain primes $p$ with $p\leq N$ is
approximately
\begin{equation*}
2C_{2}\int_{2}^{N}\frac{dx}{\log x\log 2x}\sim
\frac{2C_{2}N}{(\log N)^{2}}
\end{equation*}
\end{conjecture}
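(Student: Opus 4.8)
The plan is to obtain the statement as a special case of Conjecture~\ref{KeyConjecture} (Dickson's Conjecture), applied to the pair of linear polynomials $f_1(x) = x$ and $f_2(x) = 2x+1$, so that $k = 2$ and $d_1 = d_2 = 1$, and then to simplify the resulting adjustment factor and integral.

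First I would verify the standing hypotheses: both $f_1$ and $f_2$ are linear, hence irreducible and integer valued, both have positive leading coefficient, and both have degree $d_i = 1 > 0$. Next I would compute $w(p)$, the number of $x \in \{0,1,\dots,p-1\}$ solving $x(2x+1) \equiv 0 \pmod p$. Modulo $2$ one has $x(2x+1) \equiv x$, so $x \equiv 0$ is the only root and $w(2) = 1$; for an odd prime $p$ the product vanishes exactly when $x \equiv 0$ or $x \equiv -2^{-1} \pmod p$, and these residues are distinct because $p$ is odd, so $w(p) = 2$. Thus $w$ agrees term by term with the twin prime computation.

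Substituting these values into the adjustment factor~\ref{adjustment factor} with $k = 2$, the prime $p = 2$ contributes $\frac{1-1/2}{(1-1/2)^2} = 2$ while each odd prime contributes $\frac{1-2/p}{(1-1/p)^2} = \frac{p(p-2)}{(p-1)^2} = 1 - \frac{1}{(p-1)^2}$; by~\ref{eq:C_2} the full product is therefore $2C_2$, exactly as for twin primes. Feeding this together with $d_1 d_2 = 1$ into the sharper integral form~\ref{pi(better)} of the conjecture gives the count $2C_2 \int_2^N \frac{dx}{\log x \,\log(2x+1)}$, and replacing $\log(2x+1)$ by $\log 2x$ (a harmless change at this level of precision) yields the first expression in the statement. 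For the asymptotic form I would use $\log 2x = \log 2 + \log x \sim \log x$ to get $\int_2^N \frac{dx}{\log x\,\log 2x} \sim \int_2^N \frac{dx}{(\log x)^2} \sim \frac{N}{(\log N)^2}$, hence the predicted count is $\sim \frac{2C_2 N}{(\log N)^2}$. Note that the prediction differs from the twin prime prediction only in the replacement of $(\log x)^2$ by $\log x \log 2x$ in the integrand.

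The only genuine obstacle is the one already emphasized in the warnings above: this is a heuristic, not a theorem. The step that cannot be made rigorous is precisely the passage through Conjecture~\ref{KeyConjecture}, which rests on treating ``$n$ is prime'' and ``$2n+1$ is prime'' as independent events once the local densities at each prime $p$ have been accounted for; indeed it is not even known that infinitely many Sophie Germain primes exist. Everything after that appeal is elementary algebra and routine asymptotics.
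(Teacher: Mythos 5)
Your proposal matches the paper's own derivation: it likewise specializes Conjecture~\ref{KeyConjecture} to the polynomials $n$ and $2n+1$, finds $w(2)=1$ and $w(p)=2$ for odd $p$ so that the adjustment factor is again $2C_2$, and then invokes the sharper integral form~(\ref{pi(better)}) to replace $(\log x)^2$ by $\log x\,\log 2x$. The computations are correct and the approach is essentially identical, down to the observation that only the refined integrand distinguishes this from the twin prime case.
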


Again, this estimate (at least the integral) is surprisingly
accurate for small values of $N$, see Table\footnote{Chip
Kerchner provided the last two entries in table
\ref{table:SophieGermain}. (Personal e-mail 25 May 1999.)}
\ref{table:SophieGermain}.

\begin{table}[htbp]
\caption{Sophie Germain primes less than $N$}
\begin{tabular}{rrrr}
\hline & \textbf{actual}
 & \multicolumn{2}{c}{\textbf{predicted}} \\
$N$ & \textbf{number} & \textbf{integral} & \textbf{ratio} \\
\hline
1,000 & \textbf{37} & \textbf{39} & 28 \\
10,000 & \textbf{190} & \textbf{195} & 156 \\
100,000 & \textbf{1171} & \textbf{1166} & 996 \\
1,000,000 & \textbf{7746} & \textbf{7811} & 6917 \\
10,000,000 & \textbf{56032} & \textbf{56128} & 50822 \\
100,000,000 & \textbf{423140} & \textbf{423295} & 389107 \\
1,000,000,000 & \textbf{3308859} & \textbf{3307888} & 3074425 \\
10,000,000,000 & \textbf{26569515} & \textbf{26568824} & 24902848 \\
100,000,000,000 & \textbf{218116524} & \textbf{218116102} & 205808662 \\
\hline
\end{tabular}
\label{table:SophieGermain}
\end{table}

\subsection{Cunningham chains}

Cunningham chains can be thought of as a generalization of Sophie
Germain primes. If the terms of the sequence $$\{ p, 2p+1, 4p+3,
\ldots, 2^{k-1}p+2^{k-1}-1 \}$$ are all prime, then this sequence
is called a Cunningham chain of length $k$. (Sophie Germain primes
yield Cunningham chains of length two.) There is a second type of
these chains, called Cunningham chains of the second kind, which
are prime sequences of the form $$\{p, 2p-1, 4p-3, \ldots,
2^{k-1}p-2^{k-1}+1\}.$$ For either of these forms it is easy to
show that $w(2)=1$, and that for odd primes $p$, $w(p)=\min
(k,ord_{p}(2))$. So the resulting estimate is as follows.

\begin{conjecture}
The number of Cunningham chains of length $k$ beginning with
primes $p$ with $p\leq N$ is approximately
\begin{equation*}
B_{k}\int_{2}^{N}\frac{dx}{\log x\log (2x) \ldots \log (2^{k-1}x)}%
\sim \frac{B_{k}N}{(\log N)^{k}}
\end{equation*}
where $B_{k}$ is the product\
\begin{equation*}
B_{k}=2^{k-1}\prod\limits_{p>2}\frac{p^{k}-p^{k-1}min(k,ord_{p}(2))}{%
(p-1)^{k}}\text{.}
\end{equation*}
\end{conjecture}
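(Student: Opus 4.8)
The plan is to derive the Cunningham-chain conjecture as a direct specialization of Conjecture \ref{KeyConjecture} (Dickson's Conjecture), so the work is entirely in computing the relevant data $d_i$ and $w(p)$ and then simplifying the adjustment factor \eqref{adjustment factor} into the stated product $B_k$. First I would fix the polynomials: for a Cunningham chain of the first kind, $f_i(x) = 2^{i-1}x + 2^{i-1} - 1$ for $i = 1, \dots, k$ (and $f_i(x) = 2^{i-1}x - 2^{i-1}+1$ for the second kind). Each is linear with positive leading coefficient, hence $d_i = 1$, so $d_1 d_2 \cdots d_k = 1$ and the leading term of the estimate is $N/(\log N)^k$ as claimed; the refined integral $\int_2^N dx/(\log x \log(2x)\cdots\log(2^{k-1}x))$ comes from using the sharper form \eqref{pi(better)} with $\log f_i(x) \sim \log(2^{i-1}x)$. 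I would note the polynomials are distinct and irreducible, so Conjecture \ref{KeyConjecture} applies.

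Next I would compute $w(p)$, the number of $x \bmod p$ with $\prod_i f_i(x) \equiv 0 \pmod p$. For $p = 2$: every $f_i$ is odd when $x$ is even and $2^{i-1}x + 2^{i-1}-1$ for $i\ge 2$ is even when... actually for $p=2$, $f_1(x) = x$ vanishes at $x\equiv 0$, while $f_i(x) \equiv 1 \pmod 2$ for $i \ge 2$ (since $2^{i-1}x$ and $2^{i-1}-1$ are both handled mod $2$: $2^{i-1}x\equiv 0$, $2^{i-1}-1\equiv 1$), so $w(2) = 1$. For odd $p$: $f_i(x) \equiv 0$ has the unique solution $x \equiv -(2^{i-1}-1)/2^{i-1} \equiv 1 - 2^{-(i-1)} \pmod p$ (using that $2$ is invertible mod $p$). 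These solutions for $i = 1, \dots, k$ are $1 - 2^0, 1 - 2^{-1}, \dots, 1 - 2^{-(k-1)}$, and two of them coincide iff $2^{-(i-1)} \equiv 2^{-(j-1)}$, i.e. iff $i \equiv j \pmod{\mathrm{ord}_p(2)}$. Hence the number of distinct residues among them is $\min(k, \mathrm{ord}_p(2))$, giving $w(p) = \min(k, \mathrm{ord}_p(2))$ for odd $p$. The second-kind chains give the shifted solutions $1 + 2^{-(i-1)}$ and the same counting argument applies.

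Finally I would assemble the adjustment factor. Plugging into \eqref{adjustment factor} with $w(2) = 1$ and $w(p) = \min(k, \mathrm{ord}_p(2))$ for $p > 2$:
\begin{equation*}
\prod_p \frac{1 - w(p)/p}{(1-1/p)^k} = \frac{1 - 1/2}{(1-1/2)^k} \prod_{p>2} \frac{1 - \min(k,\mathrm{ord}_p(2))/p}{(1-1/p)^k} = 2^{k-1} \prod_{p>2} \frac{p^k - p^{k-1}\min(k,\mathrm{ord}_p(2))}{(p-1)^k},
\end{equation*}
where I multiply numerator and denominator of the $p>2$ factor by $p^k$, and the $p=2$ factor is $(1/2)/(1/2)^k = 2^{k-1}$. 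This is exactly $B_k$, completing the identification and hence the conjecture.

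The routine parts are the mod-$p$ solution counts and the algebra in the last display; the only place needing a little care is the claim that the $k$ residues $1 - 2^{-(i-1)} \pmod p$ collapse to exactly $\min(k,\mathrm{ord}_p(2))$ distinct values — this is where the multiplicative order enters — but since $i \mapsto 2^{-(i-1)} \bmod p$ cycles with period $\mathrm{ord}_p(2)$ and is injective on each period, the count is immediate. I do not expect a genuine obstacle here: the statement is a heuristic consequence of Conjecture \ref{KeyConjecture}, not a theorem requiring proof, so "proof" means only verifying that the specialization of $w(p)$ and the $d_i$ is carried out correctly.
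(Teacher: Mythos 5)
Your derivation is correct and follows essentially the same route as the paper: specialize Conjecture \ref{KeyConjecture} to the linear polynomials $f_i(x)=2^{i-1}x\pm(2^{i-1}-1)$, verify $w(2)=1$ and $w(p)=\min(k,\mathrm{ord}_p(2))$ for odd $p$ via the periodicity of $2^{-(i-1)}\bmod p$, and simplify the adjustment factor \eqref{adjustment factor} to $B_k$, with the refined integral coming from \eqref{pi(better)}. The only blemish is a harmless sign slip in the root mod $p$ (one has $-(2^{i-1}-1)/2^{i-1}\equiv 2^{-(i-1)}-1$, not $1-2^{-(i-1)}$), which does not affect the count of distinct residues.
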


\noindent (This conjecture for $k=2, 3$ and $4$, can be found in
\cite{Loh89}.)

 Note that $min(k,ord_{p}(2))$ is just $k$ when
$p>2^{k}$, so we can again write these adjustment factors in
terms of the Hardy-Littlewood constants:
$$B_{k}=2^{k-1} c_k
\prod\limits_{k < p < 2^k}\frac{p-min(k,ord_{p}(2))}{p-k}
\prod\limits_{2 < p \leq
k}\frac{1-min(k,ord_{p}(2))/p}{(1-1/p)^k}.$$ We then count the
Cunningham Chains less than $10^{9}$ as an example to test our
conjecture. As one would expect the agreement is better for the
lower $k$ because these forms yield many more primes for this
small choice of $N$.


\begin{table}[htbp]
\caption{Cunningham chains of length $k$ starting before $10^9$}
\begin{tabular}{cccccc} \hline \textbf{length} &
adjustment & \multicolumn{2}{c}{\textbf{actual number}}
& \multicolumn{2}{c}{\textbf{predicted}} \\
$k$ & factor $B_{k}$ & \textbf{first kind} & \textbf{second kind}
& \textbf{integral} & \textbf{ratio} \\ \hline 2 & 1.320323631694
& \multicolumn{1}{r}{\textbf{3308859}} &
\multicolumn{1}{r}{\textbf{3306171}} &
\multicolumn{1}{r}{\textbf{3307888}} & \multicolumn{1}{r}{3074426} \\
3 & 2.858248595719 & \multicolumn{1}{r}{\textbf{342414}} &
\multicolumn{1}{r}{\textbf{341551}} &
\multicolumn{1}{r}{\textbf{342313}} & \multicolumn{1}{r}{321163} \\
4 & 5.534907817650 & \multicolumn{1}{r}{\textbf{30735}} &
\multicolumn{1}{r}{\textbf{30962}} &
\multicolumn{1}{r}{\textbf{30784}} & \multicolumn{1}{r}{30011} \\
5 & 20.26358989999 & \multicolumn{1}{r}{\textbf{5072}} &
\multicolumn{1}{r}{\textbf{5105}} &
\multicolumn{1}{r}{\textbf{5092}} & \multicolumn{1}{r}{5302} \\
6 & 71.96222721619 & \multicolumn{1}{r}{\textbf{531}} &
\multicolumn{1}{r}{\textbf{494}} &
\multicolumn{1}{r}{\textbf{797}} & \multicolumn{1}{r}{909} \\
7 & 233.8784426339 & \multicolumn{1}{r}{\textbf{47}} &
\multicolumn{1}{r}{\textbf{46}} &
\multicolumn{1}{r}{\textbf{112}} & \multicolumn{1}{r}{142} \\
\hline
\end{tabular}
\label{table:CuninghamChains}
\end{table}

\subsection{Primes of the form $n^{2}+1$}

 If we use the single polynomial $n^{2}+1$, then
$w(2)=1$, and $w(p)=1+(-1|p)$ for odd primes $p$. Here $(-1|p)$
is the Legendre symbol, so it is $1$ if there is a solution to
$n^{2}\equiv -1 \pmod{p}$, and $-1$ otherwise. Now the adjustment
factor (after a little algebra) is
\begin{equation*}
2\prod\limits_{p>2}1-\frac{(-1|p)}{(p-1)^{2}}=1.3728134628...
\end{equation*}
Calling this constant $C_{+}$ we conjecture that the expected
number of values of $n\leq N$ yielding primes $n^{2}+1$ is
\begin{equation*}
\frac{C_{+}}{2}\int_{2}^{N}\frac{dx}{\log x}\sim \frac{C_{+}}{2}\frac{N}{%
\log N}.
\end{equation*}
But this is not how we usually word our estimates. Often, we
would desire instead the number of primes $n^{2}+1$ that are at
most $N$ (the resulting prime is at most $N$, rather than the
variable $n$ is at most $N$). So we need to replace $N$ by
$\sqrt{N}$ in the integrals' limit, to get:

\begin{conjecture}
The expected number of primes of the form $n^{2}+1$ less than or
equal to $N$ is
\begin{equation}
\frac{C_{+}}{2}\int_{2}^{\sqrt{N}}\frac{dx}{\log x}\sim C_{+}\frac{\sqrt{N}}{%
\log N}.
\end{equation}\label{eq:Cplus}
\end{conjecture}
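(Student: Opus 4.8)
The plan is to \emph{derive} this prediction from Conjecture~\ref{KeyConjecture} rather than to prove it in any rigorous sense; as the preceding discussion stresses, these are heuristics, and here it is not even known that $n^{2}+1$ takes prime values infinitely often. So by ``proof'' I mean: specialise Conjecture~\ref{KeyConjecture} to the single polynomial $f_{1}(x)=x^{2}+1$, work out its local data, and then change variables to pass from counting $n\le N$ to counting the primes $n^{2}+1$ that are themselves $\le N$. First I would check that $x^{2}+1$ is irreducible over the rationals, integer valued, and has positive leading term and degree $d_{1}=2>0$, so that Conjecture~\ref{KeyConjecture} applies with $k=1$.

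Next I would compute $w(p)$, the number of roots of $x^{2}+1$ modulo $p$. For $p=2$ the only root is $x\equiv1$, so $w(2)=1$. For odd $p$ the congruence $x^{2}\equiv-1\pmod{p}$ has $1+(-1|p)$ solutions by Euler's criterion---two when $p\equiv1\pmod4$ and none when $p\equiv3\pmod4$---so $w(p)=1+(-1|p)$. Substituting into the per-prime factor $\frac{1-w(p)/p}{(1-1/p)^{k}}$ with $k=1$, the $p=2$ factor is $\frac{1-1/2}{1-1/2}=1$ and each odd factor collapses to $\frac{p-1-(-1|p)}{p-1}=1-\frac{(-1|p)}{p-1}$; write $C_{+}=\prod_{p>2}\bigl(1-\frac{(-1|p)}{p-1}\bigr)\approx1.3728$ for the resulting product. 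Conjecture~\ref{KeyConjecture}, with $\frac{1}{d_{1}}=\frac12$, then says that the number of $n\le N$ with $n^{2}+1$ prime is asymptotic to $\frac{C_{+}}{2}\int_{2}^{N}\frac{dx}{\log x}$, which is the ``$n\le N$'' form quoted just before the conjecture.

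The last step is the reparametrisation. The condition that the prime $n^{2}+1$ be $\le N$ is $n\le\sqrt{N-1}$, and since $\sqrt{N-1}=\sqrt N+o(1)$ this is $n\le\sqrt N$ up to a negligible error in the count, so I replace the upper limit $N$ by $\sqrt N$ to obtain $\frac{C_{+}}{2}\int_{2}^{\sqrt N}\frac{dx}{\log x}$. The bookkeeping then works out neatly: $\int_{2}^{M}\frac{dx}{\log x}\sim\frac{M}{\log M}$, so with $M=\sqrt N$ we get $\frac{\sqrt N}{\log\sqrt N}=\frac{2\sqrt N}{\log N}$, and the factor $2$ cancels the $\frac12=\frac{1}{d_{1}}$, leaving the asymptotic $C_{+}\frac{\sqrt N}{\log N}$ as claimed.

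As with every estimate in this chapter, the ``hard part'' is that nothing here is a theorem: the whole derivation rests on the independence hypothesis built into Conjecture~\ref{KeyConjecture}, which is precisely the unjustified step (the infinitude of primes $n^{2}+1$ is one of Landau's open problems). Even the subsidiary points deserve care: the product defining $C_{+}$ is only conditionally convergent---the factors behave like $1-\frac{(-1|p)}{p}+O(p^{-2})$ and $\sum_{p}\frac{(-1|p)}{p}$ is not absolutely convergent---so to pin down its value, or to express it in closed form, one should group the $p\equiv1$ and $p\equiv3\pmod4$ factors or recast the product through $L(1,\chi_{4})=\pi/4$ and the zeta-function technique of the ``Evaluating the adjustment factors'' subsection.
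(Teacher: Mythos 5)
Your derivation is correct and follows the paper's route exactly: specialize Conjecture~\ref{KeyConjecture} to the single polynomial $x^{2}+1$ with $d_{1}=2$, compute $w(2)=1$ and $w(p)=1+(-1|p)$, obtain the adjustment factor $C_{+}=\prod_{p>2}\bigl(1-\frac{(-1|p)}{p-1}\bigr)\approx 1.3728$, and then replace the upper limit $N$ by $\sqrt{N}$ to count the primes themselves rather than the arguments $n$. (Your written form of $C_{+}$ is in fact the correct one; the paper's displayed product, with its leading $2$ and the $(p-1)^{2}$ in the denominator, does not match the numerical value $1.3728134628\ldots$ and appears to be a slip carried over from the twin-prime formula.)
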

\noindent (This is \cite[Conjecture E]{HL23}.)

Again these estimates are quite close, see Table
\ref{table:Primesn^2+1}.

\begin{table}[htbp]
\caption{Primes $n^{2}+1$ less than $N$}
\begin{tabular}{rrrr}
\hline
& \textbf{actual} & \multicolumn{2}{c}{ \textbf{predicted}} \\
$N$ & \textbf{number} & \textbf{integral} & \textbf{ratio}
\\ \hline
1,000,000 & \textbf{112} & \textbf{121} & 99 \\
100,000,000 & \textbf{841} & \textbf{855} & 745 \\
10,000,000,000 & \textbf{6656} & \textbf{6609} & 5962 \\
1,000,000,000,000 & \textbf{54110} & \textbf{53970} & 49684 \\
100,000,000,000,000 & \textbf{456362} & \textbf{456404} & 425861 \\
\hline
\end{tabular}
\label{table:Primesn^2+1}
\end{table}

In 1978 Iwaniec showed \cite{Iwaniec78} that there are infinitely
many $P_{2}$'s (products of two primes) among the numbers of the
form $n^{2}+1$.\footnote{He proved that if we divide $C_{+}$ by 77
in equation \ref{eq:Cplus}, then we get a lower bound for the
number of $P_2$'s represented.}  It has also be shown that there
are infinitely many of the form $n^{2}+m^{4}$, but both of these
results are far from proving there are infinitely many primes of
the form $n^{2}+1$.

\addtocontents{toc}{\vspace{5pt}}
\section{Non-polynomial forms}
\addtocontents{toc}{\vspace{3pt}}

In this section we attempt to
apply similar reasoning to non-polynomial forms. There are quite
a few examples of this in the literature:  Mersenne
\cite{Schroeder83, Wagstaff83}, Wieferich \cite{CDP97},
generalized Fermat \cite{DG2000}\footnote{The authors treated
these as polynomials by fixing the exponent and varying the
base.}, primorial and factorial \cite{CG2000}, and primes of the
form $k \cdot 2^{n}+1$ \cite{BR98}.  We will look at several of
these cases below including the Cullen and Woodall primes
(perhaps for the first time).

In the previous sections we took advantage of the fact that for a
polynomial $f(x)$ with integer coefficients, $f(x+p)\equiv
f(x)\pmod{p}$. This is rarely the case when $f(x)$ has a more
general form, and is definitely not true for the form $2^n-1$.  So
rather than use Dickson's Conjecture \ref{KeyConjecture} as we did
in all of the previous sections, we will proceed directly from our
key heuristic: associating with the random number $n$ the
probability $1/\log n$ of being prime--then trying to adjust for
`non-randomness' in each case.

A second common problem we will have with these examples is that
very few primes of each form are known, usually only a couple
dozen at best. When we look back at the numerical evidence for
the polynomial examples, we can not help but notice the
spectacular agreement between the heuristic estimate and the
actual count just begins to show itself after we have many
hundreds, or thousands, of examples.  For that reason it will be
difficult to draw conclusion below from simply counting. We
will also look at the distribution of the know examples and in
some cases the gaps between these examples.


\subsection{Mersenne primes and the Generalized Repunits}

A repunit is an integer all of whose digits are all one such as
the primes $11$ and $1111111111111111111$. The generalized
repunits (repunits in radix $a$) are the numbers
$R_k(a)=(a^k-1)/(a-1)$.  When $a$ is 2 these are the Mersenne
numbers.  When $a$ is 10, they are the usual repunits.

Before we estimate the number of generalized repunit primes, we
must first consider their divisibility properties.  For example,
if $k$ is composite, then the polynomial $x^k-1$ factors, so for
$R_k(a)$ to be prime, $k$ must be a prime $p$.  As a first
estimate we might guess the probability that $R_k(a)$ is prime is
roughly $(1/\log R_k(a)) (1/\log k) \sim 1/((k-1) \log k \log a)$.

Next suppose that the prime $q$ divides $R_p(a)$ with $p$ prime.
Then the order of $a \pmod{q}$ divides $p$, so is $1$ or $p$.  If
the order is $1$, then $a \equiv 1 \pmod{q}$, $R_p(a) \equiv p$
and therefore $p=q$.  If the order is $p$, then since the order
divides $q-1$, we know $p$ divides $q-1$.  We have shown that
every prime divisor $q$ of $R_p(a)$ is either $p$ (and divides
$a-1$) or has the form $k p+1$ for some integer $k$.

Among other things, this means that for most primes $p$, $R_p(a)$
in not divisible by any prime $q < p$, so we can adjust our
estimate that $R_k(a)$ is prime by multiplying by $1/(1-1/q)$ for
each of these primes.  Here we need to recall an important tool:
\begin{theorem}[Merten's Theorem]
\begin{equation*}
\prod_{\substack{q \leq x \\ q\text{ prime}}}
 \left(1-\frac{1}{q}\right) = \frac{e^{-\gamma}}{\log x} + O(1),
\end{equation*}
\label{thm:Merten}
\end{theorem}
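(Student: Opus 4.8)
The plan is to take the logarithm of the product, reduce the resulting prime sum to Mertens' first theorem $\sum_{p\le x}\tfrac{\log p}{p}=\log x+O(1)$, and then identify the limiting constant analytically by comparing two expansions of $\log\zeta(s)$ near $s=1$. First I would prove the first theorem by the classical elementary argument: estimate $\log\lfloor x\rfloor!$ in two ways — by Stirling's formula as $x\log x-x+O(\log x)$, and by Legendre's formula $\log\lfloor x\rfloor!=\sum_{p}\sum_{j\ge1}\lfloor x/p^{j}\rfloor\log p$ — and subtract; the terms with $j\ge2$ together with the fractional-part discrepancies (the latter controlled by the elementary Chebyshev bound $\vartheta(x)=O(x)$) contribute only $O(x)$, so dividing by $x$ gives $\sum_{p\le x}\tfrac{\log p}{p}=\log x+O(1)$. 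Partial summation against the weight $1/\log t$ then upgrades this to Mertens' second theorem, $\sum_{p\le x}\tfrac1p=\log\log x+M+O(1/\log x)$ for a constant $M$.

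Next, taking logarithms,
\[
\log\prod_{p\le x}\Bigl(1-\frac1p\Bigr)=\sum_{p\le x}\log\Bigl(1-\frac1p\Bigr)=-\sum_{p\le x}\frac1p-\sum_{p\le x}\sum_{k\ge2}\frac1{kp^{k}}.
\]
The double series converges absolutely, and extending the inner range from $p\le x$ to all primes costs only $O(1/x)$; writing $B$ for the full double sum and inserting the second theorem yields $\log\prod_{p\le x}(1-1/p)=-\log\log x-M-B+O(1/\log x)$, hence $\prod_{p\le x}(1-1/p)=\frac{e^{-(M+B)}}{\log x}\bigl(1+O(1/\log x)\bigr)$. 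So the whole statement reduces to the single identity $M+B=\gamma$.

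To prove $M+B=\gamma$ I would compare two asymptotic formulas for $\log\zeta(s)$ as $s\to1^{+}$. From the Euler product, $\log\zeta(s)=\sum_{p}\sum_{k\ge1}\tfrac1{kp^{ks}}=P(s)+\bigl(B+o(1)\bigr)$, where $P(s)=\sum_p p^{-s}$; while from the Laurent expansion $\zeta(s)=(s-1)^{-1}+\gamma+O(s-1)$ one gets $\log\zeta(s)=-\log(s-1)+o(1)$, so that $P(s)=-\log(s-1)-B+o(1)$. Independently, feeding Mertens' second theorem into an Abelian (Mellin) argument — writing $P(s)=(s-1)\int_{2}^{\infty}\bigl(\log\log t+M+O(1/\log t)\bigr)t^{-s}\,dt$ and substituting $u=(s-1)\log t$ — gives $P(s)=-\log(s-1)+M-\gamma+o(1)$, the $-\gamma$ appearing precisely through $\int_{0}^{\infty}e^{-u}\log u\,du=\Gamma'(1)=-\gamma$. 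Equating the two expressions for $P(s)+\log(s-1)$ gives $-B=M-\gamma$, i.e.\ $M+B=\gamma$, which finishes the proof.

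The one genuine obstacle is this identification of the constant. The elementary steps unavoidably produce $e^{-(M+B)}/\log x$ for some constant, and there is no way to recognize $M+B$ as $\gamma$ without an analytic input equivalent to knowing the pole and constant term of $\zeta$ at $s=1$ (equivalently, the value $\Gamma'(1)=-\gamma$). Everything else — Stirling's formula, Legendre's formula, the two partial summations, and the convergence of the $k\ge2$ tail — is routine $O$-estimate bookkeeping.
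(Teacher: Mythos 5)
Your proof is correct, and since the paper gives no proof of its own --- it simply cites Hardy and Wright \cite[p.\ 351]{HW79} --- your argument is essentially the classical one being referenced: Mertens' first theorem via the two estimates of $\log\lfloor x\rfloor!$, partial summation to $\sum_{p\le x}1/p=\log\log x+M+O(1/\log x)$, exponentiation, and identification of the constant $M+B=\gamma$ by comparing two expansions of $P(s)$ near $s=1$. Note that what you actually establish is the meaningful form $\prod_{q\le x}(1-1/q)=\frac{e^{-\gamma}}{\log x}\bigl(1+O(1/\log x)\bigr)$; the additive ``$+\,O(1)$'' in the statement as printed is surely a typo, since with that error term the assertion is vacuously true (the product is bounded), so your stronger multiplicative version is the right thing to prove.
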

\noindent (For a proof see \cite[p. 351]{HW79}.)  So our second
estimate of the probability that $R_k(a)$ is prime is $e^\gamma
/k \log a$.

\begin{figure}[htbp]
\caption{$\log_2 \log_2 n$th Mersenne prime verses $n$}
\begin{center}
\includegraphics[width=0.55\textwidth]{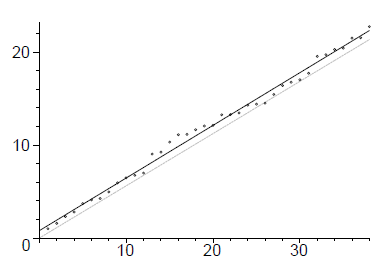}
\end{center}
\label{figure:Mersenne}
\end{figure}

http://www.utm.edu/research/primes/mersenne/heuristic.html

\begin{figure}[htbp]
\caption{$\log_{10} \log_{10} n$th repunit prime verses $n$}
\begin{center}
\includegraphics[width=0.5\textwidth]{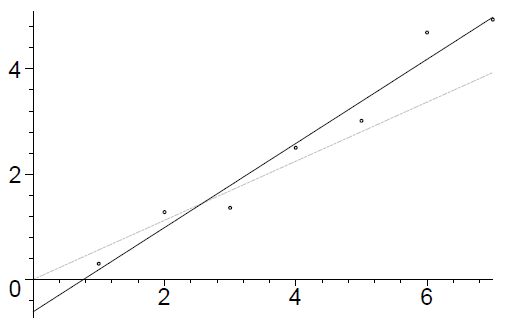}
\end{center}
\label{figure:Repunits}
\end{figure}

\subsection{Cullen and Woodall primes}

The Cullen and Woodall primes are $C(n)=n2^{n}+1$, and
$W(n)=n 2^{n}-1$. In this case we have
\begin{equation*}
C(n+p(p-1))\equiv C(n) \pmod{p(p-1)}
\end{equation*}
and
\begin{equation*}
W(n+p(p-1))\equiv W(n)\pmod{p(p-1)}\text{.}
\end{equation*}
By the Chinese remainder theorem, both of these have $ord_{p}(2)$
solutions in
\begin{equation*}
\{0,1,2,...,p\cdot ord_{p}(2)\},
\end{equation*}
so we might again assume that the probabilities that $p$ divides
$C(n)$ and $W(n)$ are both $1/p$ for odd primes $p$--the same as
for an arbitrary random integers. But are
these probabilities independent for different primes $p$ and $q$?
We must ask this because $p(p-1)$ and $q(q-1)$ are not relatively prime.
We verify this independence as follows:

\begin{theorem}
Let $p$ and $q$ be distinct odd primes and let $a$ and $b$ be any
integers. The the system of congruences
\begin{equation*}
\left\{
\begin{array}{c}
n2^{n}\equiv a \pmod{p} \\
n2^{n}\equiv b \pmod{q}
\end{array}
\right.
\end{equation*}
has $lcm(pq,ord_{p}(2),ord_{q}(2))/pq$ solutions in $\{0,1,2,...,%
lcm(pq,ord_{p}(2),ord_{q}(2))\}$.

\begin{proof}
For each $r$ modulo $d=lcm(ord_{p}(2),ord_{q}(2))$ write
$n=r+sd$. Then the system above is
\begin{equation*}
\left\{
\begin{array}{c}
sd\equiv a/2^{r}-r \pmod{p} \\
sd\equiv b/2^{r}-r \pmod{q}
\end{array}
\right.
\end{equation*}
Assume that $q$ is the larger of the two primes, then we know
$q\nmid ord_{p}(2)$, so the second of these congruences has a
unique solution (modulo $q$).  If $p\nmid d$, then the first
congruence also has a unique solution, giving a total of $d$
solution to the original system (one for each $r$).  In this case
$d$ is $lcm(pq,ord_{p}(2),ord_{q}(2))/pq$. On the other hand, if
$p\mid d$, then the only acceptable choices of $r$ are those for
which $r2^{r}\equiv a \pmod{p}$.  There are $d/p$ of these--which
again is $lcm(pq,ord_{p}(2),ord_{q}(2))/pq$.
\end{proof}
\end{theorem}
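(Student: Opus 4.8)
Write $d_p=\mathrm{ord}_p(2)$, $d_q=\mathrm{ord}_q(2)$, $d=\mathrm{lcm}(d_p,d_q)$ and $M=\mathrm{lcm}(pq,d_p,d_q)$; since $p,q$ are distinct primes one checks at once that $M=\mathrm{lcm}(p,q,d)$, so $d\mid M$. The obstruction to a direct count is the exponential $2^{n}$, and the plan is to make it locally constant. I would write each residue $n$ modulo $M$ uniquely as $n=r+sd$ with $0\le r<d$ and $s$ running over $\{0,1,\dots,M/d-1\}$. Because $d_p\mid d$ and $d_q\mid d$ we have $2^{n}\equiv 2^{r}\pmod p$ and $2^{n}\equiv 2^{r}\pmod q$, so, dividing through by the invertible factor $2^{r}$, the system becomes the pair of \emph{linear} congruences
\[
  sd\equiv a\,2^{-r}-r\pmod p,\qquad sd\equiv b\,2^{-r}-r\pmod q,
\]
which I can solve for each fixed $r$ and then sum over $r$.

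First I would arrange that $q$ is the larger prime. Then $q>p>d_p$ and $q>d_q$, so $q$ divides neither $d_p$ nor $d_q$, hence $\gcd(q,d)=1$ and the congruence modulo $q$ always has a unique solution for $s$ modulo $q$. The case split is on the behaviour modulo $p$. If $p\nmid d$, then $\gcd(p,d)=1$ as well, the congruence modulo $p$ also pins $s$ down modulo $p$, and in this case $M=pqd$, so $s$ ranges over exactly $\{0,\dots,pq-1\}$; the Chinese Remainder Theorem then gives precisely one admissible $s$ for each of the $d$ values of $r$, a total of $d=M/pq$ solutions. If $p\mid d$, then $sd\equiv 0\pmod p$, so the congruence modulo $p$ is solvable exactly when $a\,2^{-r}-r\equiv 0\pmod p$, i.e. when $r\,2^{r}\equiv a\pmod p$; when it is solvable every $s$ works, and since now $M=qd$, each such $r$ contributes exactly one admissible $s$ (coming from the congruence modulo $q$).

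The only genuinely quantitative point, and the step I expect to be the crux, is counting the $r\in\{0,\dots,d-1\}$ with $r\,2^{r}\equiv a\pmod p$ in this second case. I would observe that $r\mapsto r\,2^{r}\bmod p$ has period $p\,d_p$, which divides $d$, and that on one such period, writing $r=u+v\,d_p$ with $0\le u<d_p$ and $0\le v<p$, one has $r\,2^{r}\equiv(u+v\,d_p)\,2^{u}\pmod p$; for each fixed $u$, as $v$ runs through a complete residue system modulo $p$ so does $(u+v\,d_p)\,2^{u}$, since $p$ divides neither $d_p$ nor $2^{u}$. Hence exactly $d_p$ residues $r$ per period satisfy $r\,2^{r}\equiv a\pmod p$, so $d/p$ of them do in $\{0,\dots,d-1\}$, yielding $d/p=M/pq$ solutions. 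In either case the total is $\mathrm{lcm}(pq,\mathrm{ord}_p(2),\mathrm{ord}_q(2))/pq$, which is both the asserted count and exactly what one gets under the assumption that the two divisibility conditions are independent — which is the content of the theorem.
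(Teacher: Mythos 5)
Your proposal follows the paper's argument essentially verbatim: the same substitution $n=r+sd$ with $d=\mathrm{lcm}(\mathrm{ord}_p(2),\mathrm{ord}_q(2))$, the same reduction to a pair of linear congruences in $s$ after noting $2^n\equiv 2^r$ modulo both primes, and the same case split on whether $p\mid d$. The one point where you go beyond the paper is that you actually justify the claim that exactly $d/p$ residues $r$ satisfy $r2^r\equiv a\pmod p$ (the paper asserts this without proof), and your argument --- the map $r\mapsto r2^r \bmod p$ has period $p\,\mathrm{ord}_p(2)$ dividing $d$, and for fixed $u$ the values $(u+v\,\mathrm{ord}_p(2))2^u$ run over a complete residue system modulo $p$ as $v$ does --- is correct.
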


For each odd prime the analog of the adjustment factor
(\ref{factor for prime}) is therefore one, and the complete
adjustment factor (\ref {adjustment factor}) is $2$ in both cases
(Cullen and Woodall).  This gives us the following.

\begin{conjecture}
The expected number of Cullen and Woodall primes with $n\leq N$
are each \
\begin{equation}
2\int_{2}^{N}\frac{dx}{\log x2^{x}}\backsim 2\frac{\log N-\log
2}{\log 2} \label{bad_est}
\end{equation}
\end{conjecture}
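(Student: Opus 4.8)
The plan is to assemble the conjectured count directly from the key heuristic, exactly as the paper does for the polynomial examples but now working from scratch because $C(n)=n2^n+1$ is not a polynomial in $n$. First I would recall the heuristic: a ``random'' integer near $C(n)$ is prime with probability about $1/\log C(n)$, and $\log C(n)=\log(n2^n+1)\sim n\log 2$. So before any correction the expected number of Cullen primes with $n\le N$ is $\sum_{n\le N}1/(n\log 2)$, or in integral form $\int_2^N dx/\log(x2^x)$. The same computation applies verbatim to $W(n)=n2^n-1$.

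The second step is to justify that no correction factor beyond the overall constant $2$ is needed. For the prime $2$: since $n2^n\pm1$ is always odd, $2$ never divides $C(n)$ or $W(n)$, whereas a random integer is odd only half the time; this contributes the factor $2$ (this is the $p=2$ term in the adjustment factor \ref{adjustment factor}). For each odd prime $p$, I would invoke the congruence $C(n+p(p-1))\equiv C(n)\pmod{p(p-1)}$ together with the Chinese Remainder Theorem observation that $C$ takes each admissible residue $\mathrm{ord}_p(2)$ times on $\{0,1,\dots,p\cdot\mathrm{ord}_p(2)\}$, so that the probability $p\mid C(n)$ is $1/p$ — the same as for a random integer. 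Hence the per-prime factor \ref{factor for prime} equals $1$ for every odd $p$, and the full adjustment factor \ref{adjustment factor} is just $2$.

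The third step is the point at which genuine care is required: the events ``$p\mid C(n)$'' for different odd primes $p$ are not obviously independent, because the moduli $p(p-1)$ and $q(q-1)$ share the common factor $2$ (and possibly more). This is precisely what the preceding Theorem establishes: for distinct odd primes $p,q$ and arbitrary targets $a,b$, the system $n2^n\equiv a\ (p)$, $n2^n\equiv b\ (q)$ has exactly $\mathrm{lcm}(pq,\mathrm{ord}_p(2),\mathrm{ord}_q(2))/pq$ solutions modulo $\mathrm{lcm}(pq,\mathrm{ord}_p(2),\mathrm{ord}_q(2))$ — i.e. the count factors as the product of the individual counts, which is exactly the statement that the two divisibility conditions behave independently. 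So I would cite that theorem to conclude that multiplying the per-prime factors is legitimate, and no cross-terms spoil the product.

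Finally, multiplying the base estimate $\int_2^N dx/\log(x2^x)$ by the adjustment factor $2$ gives the conjectured formula, and the asymptotic $2\int_2^N dx/\log(x2^x)\sim 2(\log N-\log 2)/\log 2$ follows from $\log(x2^x)\sim x\log 2$ and $\int_2^N dx/(x\log 2)=(\log N-\log 2)/\log 2$. The main obstacle — insofar as a heuristic derivation has one — is the independence question in the third step; everything else is the routine bookkeeping of the key heuristic. It is worth stressing (as the paper's own ``warning about heuristics'' does) that this is a heuristic derivation, not a proof in the rigorous sense, so the ``$\backsim$'' records a conjectural asymptotic rather than a theorem; the work is in making the probabilistic model self-consistent, which the independence theorem does.
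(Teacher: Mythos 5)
Your derivation follows the paper's route essentially step for step: the base count $\int_2^N dx/\log(x2^x)$ from the key heuristic, the factor $2$ from the prime $2$ (since $n2^n\pm1$ is always odd), the per-odd-prime factor of $1$ via the congruence modulo $p(p-1)$ and the Chinese Remainder Theorem, and the appeal to the preceding theorem to justify independence across distinct odd primes. The concluding asymptotic $\int_2^N dx/(x\log2)=(\log N-\log2)/\log2$ is also exactly the paper's computation, so there is nothing to add.
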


Table \ref{table:CullenWoodall} shows us that what little
evidence we have does not support (\ref{bad_est}) well for the
Cullen numbers, though it does appear reasonable for Woodall
numbers.

\begin{table}[htbp]
\caption{Cullen $C(n)$ and Woodall $W(n)$ primes}
\begin{tabular}{rrrcc}
\hline
& \multicolumn{2}{r}{\textbf{actual }(with $n<N)$} & \multicolumn{2}{c}{%
\textbf{predicted}} \\
$N$ & \textbf{Woodall} & \textbf{Cullen} & \textbf{integral} &
\textbf{ratio} \\ \hline
1000 & \textbf{15} & \textbf{2} & \textbf{15} & 18 \\
10,000 & \textbf{18} & \textbf{5} & \textbf{22} & 25 \\
100,000 & \textbf{24} & \textbf{10} & \textbf{29} & 31 \\
500,000 & $\geq $\textbf{26} & $\geq $\textbf{13} & \textbf{33} & 36 \\
1,000,000 &  &  & \textbf{35} & 38 \\ \hline
\end{tabular}
\label{table:CullenWoodall}
\end{table}


Since we have so few data points it might offer some insight to
graph the expected number of Cullen and Woodall primes below each
of the known primes of these forms (see graph ?removed?).  If our
estimate holds, then this graph would remain ``near'' the
diagonal.

\subsection{Primorial primes}

Primes of the form $p\# \pm 1$ are sometimes called the primorial
primes (a term introduced by H. Dubner as a play on the words
prime and factorial). Since $\log p\#$ is the Chebyshev theta
function, it is well known that asymptotically $\theta (p) = \log
p\#$ is approximately $p$. In fact Dusart \cite{Dusart99} has
shown

\begin{equation*}
\abs{\theta (x) - x} \leq 0.006788 \frac{x}{\log x} \text{ \ \ \
\ \ \ for }x \geq 2.89 \times 10^{7}.
\end{equation*}

\noindent We begin (as usual) noting that by the prime number
theorem the probability of a ``random'' number the size of $p\#
\pm 1$ being prime is asymptotically $\frac{1}{p}$. However, $p\#
\pm 1$ does not behave like a random variable because primes $q$
less than $p$ divide $1/q^{\text{th}}$ of a random set of
integers, but can not divide $p\# \pm 1$. So we adjust our
estimate by dividing by $1 - \frac{1}{q}$ for each of these
primes $q$. By Mertens' theorem \ref{thm:Merten} our final
estimate of the probability that $p\# \pm 1$ is prime is
$\frac{e^{\gamma} \log p}{p}$.

By this simple model, the expected number of primes $p\# \pm 1$
with $p \leq N$ would then be

\begin{equation*}
\sum_{p \leq N} \frac{e^{\gamma} \log p}{p} \sim e^{\gamma} \log N
\end{equation*}

\begin{conjecture}
The expected number of primorial primes of each of the forms $p\#
\pm 1$ with $p \leq N$ are both approximately $e^{\gamma} \log N$.
\end{conjecture}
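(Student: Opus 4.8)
The statement has two ingredients, the first of which is already assembled in the text above. By the prime number theorem the probability that an integer of size $p\#\pm1$ is prime is asymptotic to $1/\log(p\#\pm1)$, and since $\log p\# = \theta(p)$ with $\theta(p)\sim p$ (quantitatively, Dusart's bound cited above), this is $\sim 1/p$. The divisibility discussion then upgrades this to $\frac{e^{\gamma}\log p}{p}$: every prime $q\le p$ fails to divide $p\#\pm1$, so we divide by $1-1/q$ for each such $q$, and $\prod_{q\le p}(1-1/q)^{-1}\sim e^{\gamma}\log p$ by Merten's Theorem \ref{thm:Merten}. Granting the resulting model, the expected number of primorial primes $p\#\pm1$ with $p\le N$ is $\sum_{p\le N}\frac{e^{\gamma}\log p}{p}$, and the only remaining task is to evaluate this sum asymptotically.

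For that I would use partial (Abel) summation against Chebyshev's $\theta$. Writing $\theta(x)=\sum_{p\le x}\log p$, we have
\begin{equation*}
\sum_{p\le N}\frac{\log p}{p} = \frac{\theta(N)}{N} + \int_{2}^{N}\frac{\theta(t)}{t^{2}}\,dt .
\end{equation*}
Using $\theta(t)=t+O(t/\log t)$ (again Dusart, or just the prime number theorem) the boundary term is $1+o(1)$ and the integral is $\int_{2}^{N}\frac{dt}{t}+\int_{2}^{N}O\!\left(\frac{dt}{t\log t}\right)=\log N+O(\log\log N)$. Hence $\sum_{p\le N}\frac{\log p}{p}=\log N+O(\log\log N)\sim\log N$, and multiplying by $e^{\gamma}$ gives the predicted count $e^{\gamma}\log N$. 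One could equally well quote the classical Mertens estimate $\sum_{p\le N}\frac{\log p}{p}=\log N+O(1)$ directly and skip the summation by parts.

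The genuinely hard --- in fact impossible --- part is not this analytic estimate but the step taken silently before the sum is even written: the assumption that, once the obstruction ``$q\nmid p\#\pm1$ for $q\le p$'' has been accounted for, the numbers $p\#\pm1$ behave for every remaining prime like independent random integers. This is exactly the type of independence hypothesis that, as the introduction cautions, cannot be made rigorous; for instance $p\#+1$ and $p\#-1$ are themselves correlated, and nothing guarantees that the residues of $p\#\pm1$ modulo primes $q>p$ equidistribute as $p$ varies. So the honest status of the statement is this: the evaluation of the sum is a theorem, but the probability model feeding it is a heuristic, and the ``$\sim$'' should be read in that spirit --- a prediction to be tested numerically rather than a proved asymptotic.
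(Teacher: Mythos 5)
Your proposal follows essentially the same route as the paper: the $1/p$ base probability from $\theta(p)\sim p$, the Mertens adjustment $e^{\gamma}\log p/p$ for the primes $q\le p$ that cannot divide $p\#\pm 1$, and the sum $\sum_{p\le N}e^{\gamma}\log p/p\sim e^{\gamma}\log N$. Your explicit evaluation of that sum by partial summation (the paper simply asserts the asymptotic) and your caveats about the independence hypothesis are welcome elaborations, not a different argument.
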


The known, albeit limited, data supports this conjecture. What is
known is summarized in Table~\ref{tab:3}.

\begin{table}[htbp]
\caption{The number of primorial primes $p\#\ \pm 1$ with $p \le
N$} \label{tab:3}
\begin{tabular}{rccc}
\hline
 & \multicolumn{2}{c}{\textbf{actual}} & \textbf{predicted} \\
$N$ & $p\# + 1$ & $p\# - 1$ & (of each form) \\
\hline
10 & \textbf{4} & \textbf{2} & \textbf{4} \\
100 & \textbf{6} & \textbf{6} & \textbf{8} \\
1000 & \textbf{7} & \textbf{9} & \textbf{12} \\
10000 & \textbf{13} & \textbf{16} & \textbf{16} \\
100000 & \textbf{19} & \textbf{18} & \textbf{20} \\
\hline
\end{tabular}
\end{table}

\begin{remark}
By the above estimate, the $n^{\text{th}}$ primorial prime should
be about $e^{n/e^{\gamma}}$.
\end{remark}

\subsection{Factorial primes}

The primes of the forms $n! \pm 1$ are regularly called the
factorial primes, and like the ``primorial primes'' $p\# \pm 1$,
they may owe their appeal to Euclid's proof and their simple form.
Even though they have now been tested up to $n=10000$
(approximately 36000 digits), there are only $39$ such primes
known. To develop a heuristical estimate we begin with Stirling's
formula:

\begin{equation*}
\log n! = (n+\frac{1}{2}) \log n - n + \frac{1}{2} \log 2 \pi +
O\left(\frac{1}{n}\right)
\end{equation*}

\noindent or more simply: $\log n! \sim n(\log n - 1)$. So by the
prime number theorem the probability a random number the size of
$n! \pm 1$ is prime is asymptotically $\frac{1}{n(\log n - 1)}$.

Once again our form, $n! \pm 1$ does not behave like a random
variable--this time for several reasons.
First, primes $q$ less than $n$ divide $1/q$--th
of a set of random integers, but can not divide $n! \pm 1$. So we
again divide our estimate by $1 - \frac{1}{q}$ for each of these
primes $q$ and by Mertens' theorem we estimate the
probability that $n! \pm 1$ is prime to be

\begin{equation}
\label{eq:FactEst} \frac{e^{\gamma} \log n}{n(\log n - 1)}.
\end{equation}

To estimate the number of such primes with $n$ less than $N$, we
may integrate this last estimate to get:

\begin{conjecture}
The expected number of factorial primes of each of the forms $n!
\pm 1$ with $n \le N$ are both asymptotic to $e^{\gamma} \log N$
\end{conjecture}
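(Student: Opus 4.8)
The statement is the heuristic payoff of the probability estimate (\ref{eq:FactEst}), so the ``proof'' I have in mind is exactly the summation/integration that the sentence preceding it promises. First I would write the expected number of factorial primes $n!\pm 1$ with $n\le N$ as the sum $\sum_{n\le N} e^{\gamma}(\log n)/\bigl(n(\log n-1)\bigr)$ of the per-$n$ probabilities, and approximate it by the integral $\int_{c}^{N}\frac{e^{\gamma}\log x}{x(\log x-1)}\,dx$ for any fixed $c>e$; the finitely many omitted terms with $n<c$ contribute only $O(1)$ and so are invisible at the level of the asymptotic.

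Next I would carry out the integration, which is elementary: substituting $u=\log x$ turns the integrand into $e^{\gamma}\,u/(u-1)$, and since $\frac{u}{u-1}=1+\frac{1}{u-1}$ we get the antiderivative $e^{\gamma}\bigl(u+\log(u-1)\bigr)$, hence
\[
\int_{c}^{N}\frac{e^{\gamma}\log x}{x(\log x-1)}\,dx = e^{\gamma}\bigl(\log N+\log(\log N-1)\bigr)+O(1).
\]
Because $\log(\log N-1)=O(\log\log N)=o(\log N)$, the right-hand side is $\sim e^{\gamma}\log N$, which is the claim. (One can reach the same conclusion working directly with the sum: split $\frac{\log n}{n(\log n-1)}=\frac1n+\frac{1}{n(\log n-1)}$, use $\sum_{n\le N}1/n=\log N+\gamma+o(1)$, and note $\sum_{n\le N}\frac{1}{n(\log n-1)}=O(\log\log N)$.) I would also record that replacing $\log n!$ by the crude $n(\log n-1)$ instead of the full Stirling expansion is harmless: the correction perturbs $1/\log n!$ by $O(n^{-2})$ uniformly in the relevant range, and $\sum_n O(n^{-2})=O(1)$, again negligible against $\log N$.

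The main obstacle is not any of this arithmetic, which is routine, but the heuristic model itself, and it cannot be removed by the methods of this chapter. We have no proof that $n!\pm 1$ behaves like a random integer of its size, that divisibility by the distinct primes $q<n$ is independent, or that truncating the Mertens product (Theorem~\ref{thm:Merten}) precisely at $q<n$ is the ``correct'' adjustment that yields the clean constant $e^{\gamma}$ rather than something carrying a bounded multiplicative ambiguity. As elsewhere here, the derivation is a plausibility argument; the only genuine check is numerical, and with merely $39$ known factorial primes even that check is weak.
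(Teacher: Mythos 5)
Your proposal is correct and follows exactly the route the paper intends: the paper derives the per-$n$ probability $e^{\gamma}\log n/\bigl(n(\log n-1)\bigr)$ in (\ref{eq:FactEst}) and then simply says ``we may integrate this last estimate,'' which is precisely the computation you carry out (the substitution $u=\log x$ giving $e^{\gamma}(\log N+\log(\log N-1))\sim e^{\gamma}\log N$). You merely supply the elementary integration details, the equivalent summation argument, and the remark on the negligible Stirling correction that the paper leaves implicit, and your closing caveat about the heuristic nature of the model matches the paper's own discussion of the Wilson-theorem adjustments being absorbed into the $o(\log N)$ error.
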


Table~\ref{tab:2} shows a comparison of this estimate to the
known results.

\begin{table}[htbp]
\caption{The number of factorial primes $n! \pm 1$ with $n \le N$}
\label{tab:2}
\begin{tabular}{rccc}
\hline
 & \multicolumn{2}{c}{\textbf{actual}} & \textbf{predicted} \\
$N$ & $n! + 1$ & $n! - 1$ & (of each form) \\
\hline
10 & \textbf{3} & \textbf{4} & \textbf{4} \\
100 & \textbf{9} & \textbf{11} & \textbf{8} \\
1000 & \textbf{16} & \textbf{17} & \textbf{12} \\
10000 & \textbf{18} & \textbf{21} & \textbf{16} \\
\hline
\end{tabular}
\end{table}

As an alternate check on this heuristic model, notice that it also
applies to the forms $k\cdot n!\pm 1$ ($k$ small). For $1\le k\le
500$ and $1\le n\le 100$ the form $k\cdot n!+1$ is a prime 4275
times, and the form $k\cdot n!-1$, 4122 times.  This yields an
average of 8.55 and 8.24 primes for each $k$, relatively close to
the predicted $8.20$.

But what of the other obstacles to $n! \pm 1$ behaving randomly?
Most importantly, what effect does accounting for Wilson's theorem
have? These turn out not to significantly alter our estimate
above.  To see this we first we summarize these divisibility
properties as follows.

\begin{theorem}
\label{thm:FactDiv} Let $n$ be a positive integer.
\begin{enumerate}
  \item[i)] $n$ divides $1!-1$ and $0!-1$.
  \item[ii)] If $n$ is prime, then $n$ divides both $(n-1)!+1$ and $(n-2)!-1$.
  \item[iii)] If $n$ is odd and $2n+1$ is prime, then $2n+1$ divides
  exactly one of $n!\pm 1$.
  \item[iv)] If the prime $p$ divides $n!\pm 1$, then $p-n-1$ divides one of $n!\pm
  1$.
\end{enumerate}
\end{theorem}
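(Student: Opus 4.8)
The plan is to verify the four assertions essentially independently, since each is a short elementary fact about factorials modulo a prime, and then note that together they show the ``non-random'' arithmetic of $n!\pm1$ imposes no further correction beyond the Mertens factor already used. For (i), I would simply observe that $1!-1=0$ and $0!-1=0$, each of which is divisible by every positive integer $n$; this is the trivial base case and needs no argument beyond the statement. For (ii), I would invoke Wilson's theorem: if $n=p$ is prime then $(p-1)!\equiv -1\pmod p$, so $p\mid (p-1)!+1$; multiplying $(p-1)!\equiv -1$ by the inverse of $(p-1)\equiv -1\pmod p$ gives $(p-2)!\equiv 1\pmod p$, hence $p\mid (p-2)!-1$. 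That disposes of the second clause with one line of modular manipulation.

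For (iii), the key idea is that when $q=2n+1$ is prime, Wilson's theorem gives $(q-1)!=(2n)!\equiv -1\pmod q$, and I would pair the factors $j$ and $q-j=2n+1-j$ for $j=1,\dots,n$: each product $j(q-j)\equiv -j^2\pmod q$, so $(2n)!\equiv (-1)^n (n!)^2\pmod q$. Since $n$ is odd this yields $(n!)^2\equiv 1\pmod q$, so $n!\equiv \pm 1\pmod q$, i.e.\ $q$ divides exactly one of $n!\pm1$ (and not both, since $q>2$). For (iv), suppose the prime $p$ divides $n!+1$ (the $n!-1$ case is symmetric); I would relate $n!$ to $(p-1)!$ by writing $(p-1)!\equiv -1\pmod p$ and grouping the ``tail'' factors $n+1,n+2,\dots,p-1$, whose product modulo $p$ is congruent (up to sign) to $(p-1-n)!$ after replacing each factor $p-i$ by $-i$. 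Carrying the signs carefully gives $(p-1)!\equiv n!\cdot(-1)^{p-1-n}(p-1-n)!\pmod p$, and combining with $(p-1)!\equiv -1$ and $n!\equiv -1$ shows $(p-1-n)!\equiv \pm1\pmod p$, so $p$ divides one of $(p-n-1)!\pm1$, which is the assertion (noting $p-n-1$ is the intended argument of the factorial).

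I expect the main obstacle to be bookkeeping the sign $(-1)^{p-1-n}$ in part (iv) and in part (iii), and making sure the edge cases ($p\le n$, or $p-n-1\le 1$, where part (i) already applies) are covered rather than excluded; the arithmetic itself is routine Wilson-theorem algebra, so the only real care needed is in stating which of $\pm1$ occurs and handling the degenerate ranges of the factorial argument consistently.
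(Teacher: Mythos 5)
Your proposal is correct and follows essentially the same route as the paper: Wilson's theorem for (ii), the pairing $j(q-j)\equiv -j^2$ giving $(2n)!\equiv(-1)^n(n!)^2$ for (iii), and the tail-product identity $(p-1)(p-2)\cdots(n+1)\equiv(-1)^{p-n-1}(p-n-1)!$ for (iv). Your added remarks on the sign bookkeeping, the ``exactly one'' claim in (iii), and the degenerate cases in (iv) only make the argument more careful than the paper's version.
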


\begin{proof}
(ii) is Wilson's theorem.  For (iii), note that if $2n+1$ is
prime, then Wilson's theorem implies $$-1 \equiv
1\cdot2\cdot\ldots\cdot n\cdot (-n)\cdot\ldots\cdot (-1) \equiv
(-1)^n (n!)^2 \pmod{2n+1}.$$  When $n$ is odd this is $(n!)^2
\equiv 1$, so $n! \equiv \pm 1 \pmod{2n+1}$. Finally, to see (iv),
suppose $n!\equiv \pm 1 \pmod{p}$. Since $(p-1)!\equiv -1$, this
is $$(p-1)(p-2)\cdot \ldots \cdot (n+1) \equiv
(-1)^{p-n-1}(p-n-1)! \equiv \mp 1 \pmod{p}.$$  This shows $p$
divides exactly one of $(p-n-1)!\pm 1$.
\end{proof}

To adjust for the divisibility properties (ii) and (iii), we
should multiply our estimate \ref{eq:FactEst} by $1 -
\frac{1}{\log n}$, which is roughly the probability that $n+1$ or
$n+2$ is composite; and then by $1 - \frac{1}{4\log 2n}$, which
is the probability $n$ is odd and $2n+1$ is prime.  The other two
cases of Theorem~\ref{thm:FactDiv} require no adjustment.  This
gives us the following estimate of the primality of $n!\pm 1$.

\begin{equation}
\left(1 - \frac{1}{4 \log 2n}\right)\frac{e^\gamma}{n}
\end{equation}

\noindent Integrating as above suggests there should be
$e^{\gamma} (\log N - \frac{1}{4} \log \log 2N )$ primes of the
forms $n! \pm 1$ with $n \le N$.  Since we are using an integral
of probabilities in our argument, we can not hope to do much
better than an error of $o(\log N)$, so this new estimate is
essentially the same as our conjecture above.

\addtocontents{toc}{\vspace{5pt}}

\providecommand{\href}[2]{#2 ({\it #1})} \providecommand{\url}[1]{{\it #1}}
\providecommand{\bysame}{\leavevmode\hbox to3em{\hrulefill}\thinspace}
\providecommand{\MR}{\relax\ifhmode\unskip\space\fi MR }
\providecommand{\MRhref}[2]{%
  \href{http://www.ams.org/mathscinet-getitem?mr=#1}{#2}
}
\providecommand{\href}[2]{#2}

\end{document}